\numberwithin{equation}{section}
\newtheorem{theorem}{Theorem}[section]
\newtheorem{defn}[theorem]{Definition}
\newtheorem{corollary}[theorem]{Corollary}
\newtheorem{figger}[theorem]{Figure}
\newtheorem{lemma}[theorem]{Lemma}
\newtheorem{prop}[theorem]{Proposition}
\newtheorem{remark}[theorem]{Remark}
\def \begineq{\begin{equation}}
\def \endeq{\end{equation}}
\def \bb{\mathbb}
\def \mb{\mathbf}
\def \mc{\mathcal}
\def \CC{{\bb{C}}}
\def \QQ{{\bb{Q}}}
\def \RR{{\bb{R}}}
\def \ZZ{{\bb{Z}}}
\def \GGC{{\mc G}}
\def \MMC{{\mc M}}
\def \PPC{{\mc P}}
\def \({\left(}
\def \){\right)}
\def \<{\langle}
\def \>{\rangle}
\def \bar{\overline}
\def \deg{\mathrm{deg}}
\def \inter{\cap}
\def \into{\hookrightarrow}
\def \tensor{\otimes}
\def \union{\cup}
\def \vargeq{\geqslant}
\def \varleq{\leqslant}
\def \xto{\xrightarrow}
\def \Aut{{\rm Aut}}
\def \Diff{{\rm Diff}}
\def \Ham{{\rm Ham}}
\def \id{{\rm id}}
\def \img{{\rm img }}
\def \Span{{\rm Span}}
\def\nat{\Psi_L}
\def\comp{\mathrm{comp}}
\def\cm{MC} \def\cf{FC} \def\hm{MH} \def\hf{FH}
\def\pss{PSS}
\def\co{\colon\thinspace}
\def \qed{\hfill $\square$ \vspace{0.03in}}
\begin{document}

\title{Homological Lagrangian Monodromy}

\author{Shengda Hu}
\address{Department of Pure Mathematics, University of Waterloo, 200 University Ave. West, Waterloo, Canada}
\email{hshengda@math.uwateloo.ca}

\author{Fran\c{c}ois Lalonde}
\address{D\'epartement de math\'ematiques et de Statistique, Universit\'e de Montr\'eal, C.P. 6128, Succ. Centre-ville, Montr\'eal H3C 3J7, Qu\'ebec, Canada}
\email{lalonde@dms.umontreal.ca}

\author{R\'emi Leclercq}
\address{D\'epartement de math\'ematiques et de Statistique, Universit\'e de Montr\'eal, C.P. 6128, Succ. Centre-ville, Montr\'eal H3C 3J7, Qu\'ebec, Canada}
\email{leclercq@dms.umontreal.ca}

\maketitle

\abstract  We show that the Hamiltonian Lagrangian monodromy group, in its homological version, is trivial for any weakly exact Lagrangian submanifold of a symplectic manifold. The proof relies on a  sheaf approach to Floer homology given by a relative Seidel morphism.  \endabstract

\footnote{2010 Mathematics Subject Classification 53C15, 53D12, 53D40, 53D45, 57R58, 57S05, 58B20.}

\footnote{Keywords: Lagrangian monodromy, Hamiltonian isotopy, Hamiltonian fibration, Floer homology, relative Seidel morphism.}

\section{Introduction}

 Given a Lagrangian submanifod $L \subset M$ embedded in a symplectic manifold $M$, it is natural to consider the   subgroup $\GGC \subset \Diff (L)$ consisting of all diffeomorphisms of $L$ that can be obtained as the time-one map of a Hamiltonian (i.e. exact) Lagrangian isotopy $\phi_{t \in [0,1]}: L \to M$ that starts at $t=0$ at the identity map on $L$ and ends at $t=1$ at a diffeomorphism that preserves $L$. In other words, if one denotes by $\Ham_L(M) \subset \Ham(M)$ the subgroup of the group  of Hamiltonian diffeomorphisms of $M$ consisting of the diffeomorphisms $g$ sastisfying $g(L) = L$, the group $\GGC$ is  then the image of the homomorphism
$$
\Ham_L(M) \to \Diff (L)
$$
that assigns to each diffeomorphism $g \in \Ham_L(M)$ its restriction to $L$.  Denoting by $R$ any given ring, the {\em homological Hamiltonian Lagrangian monodromy problem} is the study of the subgroup $\GGC_{*,R}$ of $\Aut(H_*(L;R))$ defined as the image of $\GGC$ under the map that assigns to each diffeomorphism its action on homology (we will often assume that $R$ is given and will omit it in our notations; we will also omit the word ``exact'' since we will work with Hamiltonian isotopies only in this paper).

\medskip
The homological Lagrangian monodromy group  $\GGC_{*,R}$ is an invariant attached to each exact Lagrange isotopy class of a given Lagrangian submanifold. It is therefore of prime importance in the Lagrangian knot problem.

\medskip
To our knowledge, this group has been studied only very recently by  Mei-Lin Yau \cite{Yau},  in the two cases of the standard monotone $2$-torus and of the Chekanov torus, both living in $\RR^4$, using soft methods in a clever way. Let $\Theta_{t\in[0,1]}$ be the standard one-parameter family of elements of $SO(2)$ starting at the identity anti-clockwise and ending at the rotation by angle $\pi/2$. By the standard inclusion $SO(2) \subset U(2)$, the same path can be considered as a path of elements of $U(2)$ and it clearly restricts  to an exact isotopy of the standard torus $T_{a,a} = S^1(a) \times S^1(a) \subset \CC \times \CC$ (here the number in parentheses denotes the area of the circle) whose endpoint at $t=1$ permutes the two standard generators of $H_1(T_{a,a}; \ZZ)$. The main result of \cite{Yau} is that this induces the only non-trivial element of $\GGC_{*,\ZZ}$, and thus $\GGC_{*,\ZZ} = \ZZ_2$ for the standard torus.  M.-L.Yau also shows that the same result holds for the Chekanov 2-torus of $\RR^4$. Moreover she shows that the intrinsic spectral (and symplectic) invariants attached to the non-trivial element of $\GGC_{*,\ZZ}$ in each of these two cases are different, so that this provides another proof of the fact that the standard 2-torus is not exact Lagrange isotopic to the Chekanov 2-torus.

\medskip
 The other extreme case is the one of a closed exact Lagrangian submanifold $L$ in a cotangent bundle $T^*V$. A famous conjecture states that $L$ should then be Hamiltonian isotopic to the zero section. If this conjecture is true, then obviously the group $\GGC_*$ is trivial for all coefficients, that is to say it consists of the identity only. A homological version of this conjecture has been proved by Fukaya, Seidel and Smith in \cite{FSS}:  they have indeed shown that  if $V$ is  simply connected, then an exact Lagrangian embedding with vanishing Maslov class of a spin manifold  $L$ in $T^*V$ must project  to a map $L \to V$ inducing an isomorphism in homology over $\QQ$.  Thus, under these hypotheses, the group $\GGC_{*,\QQ}$ is clearly trivial.

 \medskip
 The main goal of this paper is to prove the fundamental result that, when $L$ is a weakly exact Lagrangian submanifold of a symplectic manifold $M$, and under certain natural conditions on $L$ only related to the choice of the coefficients ring, this still holds, that is to say the group $\GGC_*$ is trivial. Thus, at least as far as the group $\GGC_*$ is concerned, a weakly exact Lagrangian submanifold behaves like the zero section of a cotangent bundle. The additional natural conditions to which we referred are the usual conditions under which the Floer homology over $R$ is well-defined. We recall that, by definition, $L$ is {\em weakly exact} if
 \begin{quotation}
$I_\omega : \pi_2(M, L) \to \RR : \beta \mapsto \int_\beta \omega$
\end{quotation}
vanishes. Obviously, this implies that $M$ is symplectically aspherical, i.e that $I_\omega$ vanishes on $\pi_2(M)$.

  \medskip
 Our main result is  the following:

 \begin{theorem}\label{thm:main}  Let $(M, \omega)$ be a symplectic manifold and $L \subset M$ a closed weakly exact Lagrangian submanifold. Let $g_{t \in [0,1]}$ be a Hamiltonian diffeotopy of $M$ starting at the identity and ending at a diffeomorphism preserving $L$. Then the map on homology $g_{1*} : H_*(L;R) \to H_*(L;R)$ induced from $g_1|_L$ is the identity. In this statement, we can use $\ZZ$, $\QQ$ or $\ZZ_2$ as coefficient  ring $R$, but in the case of $\ZZ$ or $\QQ$-coefficients, we require $L$ to be orientable, relatively Spin and $g_1|_L$ to preserve the orientation of $L$.

\end{theorem}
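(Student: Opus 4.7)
The plan is to translate the question into Lagrangian Floer homology via the $\pss$ isomorphism, and then kill the resulting morphism using weak exactness. Since $L$ is weakly exact and satisfies the stated orientability and spin hypotheses, $HF(L,L;R)$ is well defined and the $\pss$ map $\nat\co H_*(L;R) \xrightarrow{\cong} HF(L,L;R)$ is an $R$-module isomorphism, so the problem reduces to showing $\nat\circ g_{1*} = \nat$.

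To the Hamiltonian path $\{g_t\}_{t\in[0,1]}$, with $g_0=\id$ and $g_1(L)=L$, I would associate a \emph{relative Seidel morphism} $S(\{g_t\})\co HF(L,L;R)\to HF(L,L;R)$, constructed in the spirit of the classical Seidel representation but in an open setting: one forms the Hamiltonian fibration over a $2$-disk whose monodromy along a boundary arc is prescribed by $\{g_t\}$, decorates the boundary with the Lagrangian sub-bundle obtained by transporting $L$ along $\{g_t\}$, and counts pseudo-holomorphic sections with this Lagrangian boundary condition. The proof then rests on two claims: (i) \emph{naturality}, $\nat\circ g_{1*} = S(\{g_t\})\circ \nat$, expressing that $S(\{g_t\})$ realizes the topological action of $g_1|_L$ under $\pss$; and (ii) \emph{triviality}, $S(\{g_t\}) = \id_{HF(L,L;R)}$. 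Combined, they give $g_{1*}=\id$.

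Property (i) should follow from a gluing-of-moduli argument: gluing the $\pss$ half-tube defining $\nat$ to the disk fibration produces a moduli space representing $\nat\circ g_{1*}$, since the asymptotic marker of the $\pss$ cap is transported by $g_1|_L$ along the boundary. Property (ii) is the heart of the matter and where weak exactness is indispensable: one interpolates through a one-parameter family of Hamiltonian disk fibrations with moving Lagrangian boundary, from the one defining $S(\{g_t\})$ to a trivial fibration (whose associated morphism is the identity), and the parametrized section moduli assemble into a cobordism on which weak exactness, together with the symplectic asphericity of $M$ that it implies, rules out both disk bubbles on $L$ and sphere bubbles in $M$. This is the sheaf perspective advertised in the abstract: $HF(L,L;R)$ is the stalk of a locally constant sheaf on a space of Lagrangian boundary conditions, and its monodromy is trivial.

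The main obstacle I expect is Property (ii): one must rigorously set up parametrized Lagrangian Floer theory with moving boundary, achieve transversality for the full family over a $2$-dimensional base, and verify that the area-zero constraint on $\pi_2(M,L)$ precludes every form of bubbling. Property (i) is also not quite formal --- it requires a careful chain-level matching of almost-complex structures and Hamiltonians across the gluing region --- but once the relative Seidel morphism is set up, it is a routine homotopy-of-moduli calculation.
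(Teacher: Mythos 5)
Your overall architecture --- transport the problem to Floer homology by $\pss$, encode the path $\{g_t\}$ as a relative Seidel morphism attached to the fibration $(P_g,N)\to(D^2,S^1)$, and conclude from naturality plus triviality --- is the same skeleton as the paper's, closest to its second (algebraic) proof. The gap is in your justification of the crucial step (ii). You propose to prove $S(\{g_t\})=\id$ by interpolating through Hamiltonian disc fibrations with Lagrangian boundary from $(P_g,N)$ to the trivial pair $(M\times D^2,\,L\times S^1)$ and letting weak exactness kill bubbling along the resulting cobordism of section moduli. But that interpolation exists only if $\{g_t\}$ is contractible in $\mc P_L\Ham(M,\omega)$: the total space is indeed trivial over the disc (Lemma \ref{lem:pqisom}), but the Lagrangian boundary condition is the twisted embedding $(p,t)\mapsto(g_t^{-1}(p),e^{2\pi it})$ of the mapping torus of $g_1|_L$, and untwisting it is a topological problem on which weak exactness has no bearing. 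If the deformation you describe always existed, the identical argument would prove the theorem for monotone Lagrangians, contradicting Yau's computation that the monodromy group of the standard monotone torus $T_{a,a}\subset\RR^4$ is $\ZZ_2$. Bubbling is not the obstruction here; the homotopy class of the loop of Lagrangian boundary conditions is.

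What weak exactness actually buys is algebraic, not deformation-theoretic, and the paper exploits it in two different ways. In the geometric proof one needs only that $\nat(g,\sigma_0)$ is an \emph{isomorphism} (it is conjugate under $\pss$ to the analytic Seidel map, invertible by construction); one then extends it to a map $\Sigma$ defined on the whole Morse complex of $N$ that annihilates Morse boundaries in $N$, which forces $i_*:H_*(L)\to H_*(N)$ to be injective, and the Wang sequence of the mapping torus converts injectivity into $g_{1*}=\id$. In the algebraic proof, triviality is deduced from the $H_*(L)$-module structure: $\pss^{-1}\circ\nat(g)\circ\pss$ is a module endomorphism of $H_*(L)$ over itself sending the unit $[L]$ to $[L]$, hence is the identity --- asphericity enters by making quantum homology equal to Morse homology so that this structure is classical. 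Separately, your naturality claim (i) is stated too coarsely: $g_{1*}$ does not correspond under $\pss$ to the single Seidel map of $\{g_t\}$ (this already fails for noncontractible loops in $\Ham(M)$ acting on a monotone Lagrangian, where $g_1=\id$ yet the Seidel morphism is nontrivial); the paper either bypasses the issue entirely via the Wang sequence, or proves the precise identity $\nat^i(g_1)=\mathrm{PD}^{-1}\circ\nat(g')\circ\mathrm{PD}\circ\nat(g)$, which involves a second Seidel map for the path $g'_t=g_1\circ g_{1-t}^{-1}$ and Poincar\'e duality.
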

\noindent

  \medskip
     A priori, one could try to prove this statement by  using the invariance of the Floer homology along the flow induced by $g_{t \in [0,1]}$ to extract the informations on $\GGC_*$, a bit like what one would do using the sheaf of Floer homologies of a given Lagrangian submanifold induced by some Lagrangian fibration. This paper shows how to make sense of this idea. Indeed, our
     approach  relies on  our Relative Seidel morphism, introduced in Hu-Lalonde \cite{HuLalonde}, associated to a Hamiltonian path $g_{t \in [0,1]}$ with $g_1 \in \Ham_L(M)$, and on the equivalence between two versions of this morphism, one given in analytical terms and the other in geometric terms. The geometric version of this morphism can be considered as the right set up for an implementation of the above ``sheaf approach'' to the proof of our theorem.

     The idea of the proof of the theorem is to first consider the fibration $L \hookrightarrow N \to S^1$ over $S^1$ induced by the restriction to $L$ of the path $g_{t \in [0,1]}$ of Hamiltonian diffeomorphisms.   We then get the Wang long exact sequence

     $$\ldots \to H_{q+1}(L) \xto{i_*} H_{q+1}(N) \to H_q(L) \xto {g_{1*} - \id} H_q(L) \xto {i_*} H_{q}(N) \to \ldots$$

We must therefore show that

$$i_* : H_*(L) \to H_*(N) $$

\noindent
is injective. The point is that this fibration  constitutes  the boundary condition of a Dirichlet problem for the $\bar{\partial}$-operator. Indeed, the path $g_{t \in [0,1]}$ naturally gives rise to a  relative fibration  $(P, N)$ over $(D^2, S^1)$ with fiber $(M, L)$ and the linearity of the relative Gromov-Witten invariants will lead us to a proof of the injectivity of
$ i_* : H_*(L) \to H_*(N)$. This scheme of proof can be considered as the relative Lagrangian version of the main theorem in  Lalonde-McDuff-Polterovich \cite{LMP}.  This will be shown in \S \ref{sec:proofoftheorem} using our geometric Seidel map defined in the next section.\  In the last section of the paper, we give another proof of our main theorem which is more algebraic and simpler, but less geometric. It is possible that this second proof could also be derived using the main results in \S~22 of Fukaya-Oh-Ohta-Ono \cite{FOOO}.

\medskip
Note that this theorem, in its contrapositive 
version,  provides an obstruction to the extension of a given diffeomorphism $f:L \to L$ to a Hamiltonian diffeomorphism of the ambient symplectic manifold: more precisely, if $L$ is weakly exact and $f:L \to L$ induces a map  not equal to the identity on say $H_*(L; \ZZ_2)$, then it cannot be extended to a Hamiltonian diffeomorphism of $M$.
Obviously, because a Hamiltonian diffeomorphism is isotopic to the identity,
the statement of the theorem is interesting only when $H_*(L)$ does not inject in $H_*(M)$. There are plenty of closed weakly exact manifolds whose homologies do not inject in the homology of the ambient manifold. The simplest example is the one of a simple closed curve of a closed Riemann surface that bounds homologically but not homotopically, and product of these examples. A less trivial example is the following: consider the quotient $Q$ of $T^2 \times \RR$ by the linear map $(x,y,z) \mapsto (x+y,y,z+1)$ where $x,y$ are the coordinates on $T^2$ and $z$ on $\RR$.  Then $(dx - zdy) \wedge dz$ descends to a form on $Q$. Let $t$ be the coordinate on $S^1$. The form $\omega = (dx - zdy) \wedge dz + dy \wedge dt$ is then well defined and is symplectic on $Q \times S^1$.
Moreover, $T^2$ is Lagrangian.
The Wang sequence for the mapping torus shows that the kernel of $H_1(T^2) \to H_1(Q)$ is the class $[y]$. The homotopy exact sequence of the fibration $Q \to S^1$ shows that $\pi_1(T^2) \to \pi_1(Q)$ is injective. It then follows that the relative $\pi_2(M, T^2)$ vanishes, which implies that $(M, T^2)$ is weakly exact, and the homology does not inject.
More important, this example provides an instance of a Lagrangian isotopy (around the $z$-coordinate) of a weakly exact Lagrangian 2-torus such that the time one map of the isotopy is a self-map of the 2-torus  with non-trivial monodromy (it is a Dehn twist). This shows that our main result is actually a theorem in the Hamiltonian category.

 Here is the plan of the paper: in the next section, we describe the general set up, including the definition of our geometric relative Seidel morphism. In \S~3, we give the proof of the main theorem up to the statement according to which the geometric Seidel morphism is an isomorphism which is proved in \S~4 of this paper. The last section gives the algebraic proof to which we referred above. 

\medskip
\noindent
{\bf Acknowledgments}. We are grateful to Leonid Polterovich for pointing out to us the reference \cite{Yau} by Mei-Lin Yau. We  would like to thank Doug Park for suggesting the Thurston manifold.

\section{The Geometric Relative Seidel map}
We present the geometric Seidel morphism for Lagrangian submanifolds. In particular, we show that it is well defined and that  it satisfies the properties that we need.

\subsection{Quantum homology of $L$}
We recall the definition of the linear cluster complex (or pearl complex) as described in \cite{Oh, BiranCornea, CorneaLalonde}. Let $(M, \omega)$ be a symplectic manifold and $L \subset M$ a weakly exact Lagrangian submanifold.
Let $J$ be a $\omega$-compatible almost complex structure on $M$. Then the fact that $L$ is weakly exact implies that there are no non-trivial $J$-holomorphic spheres in $M$ as well as non-trivial $J$-holomorphic discs with boundary on $L$. It follows that the quantum homology of $L$ (cf. Biran-Cornea \cite{BiranCornea}) is well defined and is isomorphic to $H_*(L) \tensor \Lambda_L$.

 In this section, we will work in a  more general setting and will only assume
that $L$ is \emph{monotone}, i.e that there is a non-negative
constant $\lambda$ such that
\begin{quotation}
$I_\omega = \lambda I_\mu$ where $I_\mu:  \pi_2(M, L) \to \RR :\beta \mapsto \mu(\beta)$
is the Maslov index of $\beta$.
\end{quotation}

Suppose that $(M, L)$ is monotone with minimal Maslov index at least $2$.
Let $f \in C^\infty(L)$ be a Morse function and $\rho$ a Riemannian metric on $L$ so that the pair $(f, \rho)$ be Morse-Smale. Consider the configurations of $J$-holomorphic discs connected by the negative flow lines of $f$. More precisely, let $p_0 = x, q_k = y \in Crit(f)$, and $u_i : (D^2, S^1; -1, 1) \to (M, L; q_{i-1}, p_i)$, $i = 1, \ldots, k$ be $J$-holomorphic discs with boundary on $L$, such that
for each pair $(p_i, q_i)$, there is an open interval $I_i = (a_i, b_i) \subset \RR$ and $l_i : I_i \to L$ such that
$$\frac{d}{dt}l_i(t) = -(\nabla_\rho f)(l_i(t)) \text{ and } \lim_{t \to a_i} l_i(t) = p_i, \lim_{t \to b_i} l_i(t) = q_i.$$
The value $d_i := b_i - a_i$ is said to be the \emph{distance} between $p_i$ and $q_i$, which are $\infty$ for $i = 0, k$.
Let $\beta_i = [u_i] \in \pi_2(M, L)$ denote the class represented by the disc $u_i$, and set
$$\beta:= (\beta_1, \ldots, \beta_{k}) \text{ and } |\beta| := \sum_{i = 1}^{k} \beta_i.$$
Let $\widetilde {\mathcal M}(M, L; \beta; f, \rho, J; x, y)$ denote the space of such configurations. We note that for each $l_i$ and $u_i$ there is a one-parameter family of reparametrization symmetries. The {\em unparametrized moduli space}  is defined to be the quotient by all such symmetries:
$${\mathcal M}(M, L; \beta; f, \rho, J; x, y) := \widetilde {\mathcal M}(M, L; \beta; f, \rho, J; x, y) / \RR^{2k+1}.$$

In order for the theory to be well defined, we need regularity and transversality assumptions, as well as assumptions so that no branching of the linear cluster is possible at dimensions $\leqslant 1$ (cf. \cite{BiranCornea}). It is well known that all of these assumptions are satisfied in the monotone case with  minimal Maslov index at least $2$, and therefore in the weakly exact case.  With all such assumptions in place, we write down the dimension of the moduli space that we have just defined. Let $\mu_L$ denote the Maslov class for $L$ and $|x|$ the Morse index of $x \in Crit(f)$, then
\begin{equation}\label{eq:modulidim}
\dim_\RR {\mathcal M}(M, L; \beta; f, \rho, J; x, y) = |x| - |y| + \mu_L(|\beta|) - 1.
\end{equation}
The moduli spaces above are not necessarily compact as they admit real codimension $1$ boundaries of three types:
\begin{enumerate}
\item breaking of a Morse flow line, i.e. $d_i \to \infty$
\item bubbling off of a holomorphic disc, i.e. $\beta_i \to \beta_i' + \beta_i''$
\item shortening of a Morse flow line, i.e. $d_i \to 0$
\end{enumerate}
Since our assumptions exclude branching of the cluster in dimension $\leqslant 1$, the bubbling off of a holomorphic disc in a linear cluster with $k$ holomorphic discs gives rise to a linear cluster with $k+1$ holomorphic discs with some $d_i = 0$. Now consider the following union of equi-dimensional moduli spaces (in low dimensions)
$${\mathcal M}(M, L; B; f, \rho, J; x, y) := \union_{|\beta| = B} {\mathcal M}(M, L; \beta; f, \rho, J; x, y).$$
Then the type $(2)$ and $(3)$ boundaries cancel each other and only the type $(1)$ boundary remains.
This provides the essential idea of the following proposition (Oh \cite{Oh}, Cornea-Lalonde \cite{CorneaLalonde} and Biran-Cornea \cite{BiranCornea}).
\begin{prop}
The \emph{linear cluster complex} (or \emph{pearl complex}) of $(M, L)$ is  given by the following differential $\partial_{Pearl}$ on $Crit(f) \tensor \Lambda_L$,
$$\partial_{Pearl} x = \sum_{y, B} \# {\mathcal M}(M, L; B; f, \rho, J; x, y) e^B y$$
where $\Lambda_L$ is the Novikov ring for $L$, and where the counting is performed for $0$-dimensional moduli spaces only. The differential satifies $\partial_{Pearl}^2 = 0$. The \emph{quantum homology} of $L$ (in $M$) is defined as
$$QH_*(M, L; f, g, J) := H_*(Crit(f) \tensor \Lambda_L, \partial_{Pearl}).$$
\end{prop}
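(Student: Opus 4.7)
The statement to prove is twofold: the differential $\partial_{Pearl}$ is well-defined, and $\partial_{Pearl}^2 = 0$ so that quantum homology makes sense. My plan is a standard moduli-boundary argument, carried out in the familiar style of Oh, Biran--Cornea, and Cornea--Lalonde.

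First I would verify well-definedness of the differential. By the dimension formula \eqref{eq:modulidim}, contributions to $\partial_{Pearl} x$ involve moduli spaces of dimension $|x|-|y|+\mu_L(B)-1 = 0$. Under the standing monotonicity and minimal-Maslov-index-at-least-$2$ assumption, these zero-dimensional moduli spaces are compact: no disc bubbling or line-shortening can occur without lowering the dimension below zero (disc bubbling would produce a configuration with an extra disc of nonnegative Maslov index and an internal flow segment of length zero, hence of strictly smaller dimension, by the minimal Maslov index hypothesis), and Morse-flow breaking in dimension zero is excluded for the same reason. Hence the count $\#\mathcal{M}(M,L;B;f,\rho,J;x,y)$ is a finite integer (or mod 2 count if appropriate) and $\partial_{Pearl}$ is defined. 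The Novikov completion $\Lambda_L$ ensures finiteness of the sum over $B$ for fixed action window.

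Next, to prove $\partial_{Pearl}^2 = 0$, I would consider, for fixed critical points $x,z$ and class $B$, the one-dimensional moduli space $\mathcal{M}(M,L;B;f,\rho,J;x,z)$ of dimension $|x|-|z|+\mu_L(B)-1 = 1$. This space admits a compactification whose codimension-one boundary consists precisely of the three types listed above the proposition: (1) Morse-flow breaking, (2) disc bubbling, and (3) shortening of an internal flow segment to length zero. The coefficient of $e^B z$ in $\partial_{Pearl}^2 x$ equals the total count of type (1) boundary configurations, since a broken flow line at an intermediate critical point $y$ factors the cluster into two zero-dimensional sub-clusters corresponding to $\partial_{Pearl}$ applied twice.

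The key step is then the cancellation of boundary types (2) and (3). Under our hypothesis (minimal Maslov index $\geqslant 2$, no branching in dimension $\leqslant 1$), the bubbling-off of a holomorphic disc in a linear cluster with $k$ discs produces a linear cluster with $k+1$ discs with some $d_i = 0$; conversely, the shortening $d_i \to 0$ between two discs produces exactly the same configuration. Orienting the moduli spaces (or, in the $\mathbb{Z}_2$ case, simply counting) coherently, these two contributions appear with opposite signs and cancel. This is the main technical content and the main obstacle: the orientation/sign analysis ensuring that the (2)--(3) pairing is sign-reversing requires the coherent orientation scheme for pearl moduli spaces as developed in the references cited. Granting this, the total signed boundary of a compact one-manifold is zero, so the remaining type (1) contribution vanishes, giving $\partial_{Pearl}^2 = 0$.

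Finally, having $\partial_{Pearl}^2 = 0$ confirmed, the definition
$$QH_*(M, L; f, \rho, J) := H_*\bigl(Crit(f) \otimes \Lambda_L,\, \partial_{Pearl}\bigr)$$
is meaningful, and one notes (separately, though not part of the statement) that the resulting homology is independent of the auxiliary data $(f,\rho,J)$ up to canonical isomorphism by the usual continuation argument, which I would not carry out here.
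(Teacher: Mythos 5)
Your proposal is correct and follows exactly the argument the paper sketches: compactness of the zero-dimensional moduli spaces under the minimal-Maslov-index hypothesis, and the identification of the codimension-one boundary of the one-dimensional moduli spaces, with the type (2) (disc bubbling) and type (3) (flow-line shortening) boundaries cancelling so that only the type (1) Morse-breaking boundary survives and computes $\partial_{Pearl}^2 = 0$. The paper itself only states this as the ``essential idea'' and defers the technical details (transversality, orientations) to Oh, Cornea--Lalonde and Biran--Cornea, just as you do.
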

The coefficients $\# {\mathcal M}(M, L; B; f, \rho, J; x, y)$ is the counting in $\ZZ_2$ or $\QQ$. One can always work over $\ZZ_2$, while when $L$ is relatively spin and a relative spin structure is chosen, $\QQ$-coefficients can be used.

We note that the differential can be written as the sum
$$\partial_{Pearl} = \partial_{Morse} + \partial_{Pearl}',$$
where $\partial_{Morse}$ is the classical Morse differential and we may consider the cluster complex as a deformation of the classical Morse complex.
It is shown in \cite{CorneaLalonde} and  \cite{BiranCornea} that the linear cluster complex is well defined and independent of the choices made. It is therefore an invariant of $(M, L)$. We write $QH_*(M, L)$ for its homology. By a PSS type argument, it is shown in \cite{BiranCornea} that $QH_*(M, L)$ is isomorphic to $FH_*(M, L)$, the Floer homology of connecting Hamiltonian paths, whose definition in our setting is recalled in section \S~\ref{identify} for the convenience of the reader.

Under the weakly exact assumption, we note that $\partial'_{Pearl} = 0$ since there is no pseudo-holomorphic discs representing non-trivial class. Thus, the pearl complex of $(M, L)$ is the Morse complex of $L$ with coefficients in $\Lambda_L$.

\subsection{Bundle over a disc}
Let $\Ham_L(M, \omega)$ be the subgroup of Hamiltonian diffeomorphisms that preserve $L$. Let $\mc P_L\Ham(M, \omega)$ consist of paths $g_{t \in [0,1]}$ in $\Ham(M, \omega)$ such that
$$g_0 = \id \text{ and } g_1 \in \Ham_L(M, \omega).$$
Such $g=g_{t \in [0,1]}$'s define a Hamiltonian fibration over $D^2$ as follows (a similar fibration was actually carried over in a different context in Akveld-Salamon \cite{AS})  .

We consider the (closed) unit disc $D^2$ and let $D^2_\pm = \{z \in D^2 | \pm \Re z \vargeq 0\}$ be the right and left half discs. Then the fibration defined by $g$ is
$$P_{g} = M \times D^2_+ \sqcup M \times D^2_- / \sim : (x, (1 - 2t)i) \sim (g_t(x), (1-2t)i) \text{ for } t \in [0,1].$$
Let $\pi : P_g \to D^2$ denote the projection. On this Hamiltonian bundle, let $\tau$ be the coupling form constructed from a Hamiltonian function $K$ generating $g$, then
$$\omega_g := \tau + \kappa \pi^*\omega_0$$
is a symplectic form on $P_g$.
We note that along the $S^1$-boundary, we have the restricted bundle
$$N := \sqcup_{t \in S^1} L_t$$
that is obtained as the union of  the copies of $L$ in each fiber; it is a Lagrangian submanifold of $P$.
Because $L$ is weakly exact, the comparison theorem in \cite{BiranCornea} implies that $QH_*(M, L)$ is isomorphic to $FH_*(M, L)$. Because the fibration is Hamiltonian, it is easy to see that $(P_{g}, N)$ admits sections over $(D^2, S^1)$.

A class $B \in \pi_2(P_g, N)$ is a \emph{section class} if $\pi_*(B) \in \pi_2(D^2, S^1)$ is the positive generator, with respect to the natural orientation on $D^2$. We say that $B$ is a \emph{fiber class} if $B$ is in the image of the map $\pi_2(M, L) \to \pi_2(P_g, N)$ induced from inclusion of the fiber.
We claim that
\begin{lemma}
The following sequence of homotopy groups is exact at the middle term:
$$\pi_2(M, L) \xto i \pi_2(P_g, N) \xto j \pi_2(D^2, S^1).$$
\end{lemma}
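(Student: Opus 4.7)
The plan is to verify the two inclusions $\mathrm{img}(i) \subseteq \ker(j)$ and $\ker(j) \subseteq \mathrm{img}(i)$ directly from the homotopy lifting property of the Hamiltonian fiber bundle $\pi : P_g \to D^2$. The first inclusion is immediate: any class in $\pi_2(M, L)$ is represented by a map into a single fiber pair $(M_{z_0}, L_{z_0})$ for some $z_0 \in S^1$, and its projection under $\pi$ is a constant map, hence zero in $\pi_2(D^2, S^1)$.

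For the reverse inclusion, I would exploit the standard identification $\pi_2(D^2, S^1) \cong \pi_1(S^1) \cong \ZZ$ coming from the long exact sequence of the pair. Given $[u] \in \pi_2(P_g, N)$ represented by $u : (D^2, S^1) \to (P_g, N)$, the condition $j([u]) = 0$ is equivalent to $(\pi \circ u)|_{S^1} : S^1 \to S^1$ being null-homotopic, and hence to $\pi \circ u$ being homotopic, as a map of pairs, to a constant map with value some $z_0 \in S^1$. I fix such a homotopy $H : D^2 \times [0,1] \to D^2$ with $H_0 = \pi \circ u$, $H_1 \equiv z_0$, and $H(S^1 \times [0,1]) \subset S^1$.

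The crux of the argument is then to lift $H$ to a homotopy of pairs $\tilde H : (D^2, S^1) \times [0,1] \to (P_g, N)$ starting at $u$, which I would carry out in two stages. First, since $\pi|_N : N \to S^1$ is a locally trivial fiber bundle with fiber $L$, and hence a Serre fibration, the homotopy $H|_{S^1 \times [0,1]}$ lifts to $\tilde h : S^1 \times [0,1] \to N$ starting at $u|_{S^1}$. Second, since $\pi : P_g \to D^2$ is a Serre fibration satisfying the homotopy lifting property relative to the CW pair $(D^2, S^1)$, the data $(u,\tilde h)$ extends to a lift $\tilde H$ with $\pi \circ \tilde H = H$ and $\tilde H(S^1 \times [0,1]) \subset N$. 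At time $1$, the map $\tilde H_1$ factors through the fiber pair $(M_{z_0}, L_{z_0}) \cong (M, L)$, so $[u] = i([\tilde H_1])$ lies in the image of $i$. The one delicate point is this second-stage relative lifting, but it is a standard property of locally trivial smooth fiber bundles with respect to CW pairs and requires no Hamiltonian-specific input.
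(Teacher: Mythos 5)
Your proof is correct, but it takes a genuinely different route from the paper. The paper proves $\ker j \subseteq \mathrm{image}(i)$ by a diagram chase through the commutative ladder formed by the homotopy exact sequences of the pairs $(M,L)$, $(P_g,N)$, $(D^2,S^1)$ (columns) and of the fibrations $L\to N\to S^1$ and $M\to P_g\to D^2$ (rows), using the isomorphisms $\pi_k(M)\cong\pi_k(P_g)$ coming from the contractibility of $D^2$; it does not discuss the easy inclusion $\mathrm{image}(i)\subseteq\ker j$ at all. You instead re-derive the relevant piece of the exact sequence of the ``fibration of pairs'' $(M,L)\to(P_g,N)\to(D^2,S^1)$ from scratch: you compress a representative $u$ with $j([u])=0$ into a single fiber pair by lifting a null-homotopy of $\pi\circ u$, first over $S^1$ through the sub-bundle $N\to S^1$ and then over $D^2$ via the homotopy lifting extension property for the pair $(D^2\times[0,1],\,D^2\times\{0\}\cup S^1\times[0,1])$. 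Both arguments are sound; yours is more self-contained and geometric (and also records the trivial inclusion), while the paper's is shorter because it delegates all the lifting to the already-established exactness of the rows and columns. The one point you should acknowledge is the basepoint bookkeeping: your homotopy $\tilde H$ is a priori only a free homotopy of maps of pairs, so $[\tilde H_1]$ agrees with $[u]$ only after transporting along the track of the basepoint; since that track lies in the fiber $L_{z_0}\subset N$, the transported class is still a fiber class, so the conclusion stands. This is the same level of informality the paper itself adopts, and does not constitute a gap.
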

{\it Proof:}
It follows from the diagram chasing:
$$
\text{\xymatrix{
\pi_2(M) \ar[d]_k\ar[rr]^\cong && \pi_2(P_g) \ar[d] && \\
\pi_2(M, L) \ar[d]_\partial \ar[rr]^i && \pi_2(P_g, N) \ar[d]^\partial \ar[rr]^j && \pi_2(D^2, S^1) = \ZZ \ar[d]_\partial^\cong\\
\pi_1(L) \ar[d]\ar@{^{(}->}[rr] && \pi_1(N) \ar[d] \ar[rr] && \pi_1(S^1)=\ZZ \\
\pi_1(M) \ar[rr]^{\cong} && \pi_1(P_g) &&
}}$$
where the columns are homotopy exact sequences of pairs and all rows except the second are homotopy exact sequences of fibrations.

We only show that $\ker j \subset \img i$. We will use $i$ and $j$ to denote the first and second map in each row. Let $\beta \in \pi_2(P_g, N)$ and $j(\beta) = 0$. Then $j(\partial \beta) = 0$ and by exactness of the third row, we see that there exists $ \alpha' \in \pi_1(L)$ such that $i(\alpha') = \partial \beta$. By the exactness of the columns, there is $\alpha \in \pi_2(M,L)$ such that $\alpha' = \partial \alpha$ and $\partial(\beta - i(\alpha)) = 0$. Thus there is $\gamma \in \pi_2(M) \cong \pi_2(P_g)$ such that $i\circ k(\gamma) = \beta - i(\alpha)$. It follows that $\beta \in \img i$.
\qed

It means in particular that the difference of section classes is a fiber class.
\begin{defn}\label{lagSeidel:vertmaslov}
Let the smooth map $u: D^2 \to P_g$ represent $B \in \pi_2(P_g, N)$. The \emph{vertical Maslov index} of $B$, denoted $\mu^v(B)$ is the Maslov index of the bundle pair $(u^*T^vP_g, u^*T^vN)$, where $T^v = \ker d\pi$ denotes the respective vertical tangent bundles.
\end{defn}
\noindent
One can show that the above is well defined and not dependent on the choice of $u$ (e.g. \cite{HuLalonde}).
Furthermore, let $B$ and $B'$ denote two section classes, we have

\medskip
$\mu^v(B - B') = \mu_L(B-B')$ and

\smallskip
$\mu^v(B) = \mu_N(B) - 2.$

\medskip
We can introduce the following equivalence relation among the section classes
$$B \sim B'  \iff \int_{B - B'} \tau = 0 \text{ and } \mu^v(B - B') = \mu_L(B - B') = 0.$$

An alternative construction of the fibration $P_g$ is the following. Let $Q := M \times D^2$ and note that $N$ coincides with the mapping cylinder of $g^{-1}_1|_L$, i.e.
$$N \cong L \times [0,1] / (p, 0) \simeq (g^{-1}_1(p), 1).$$
We then have the inclusion $\rho_g: N \into Q$ given by
$$(p, t) \mapsto (g^{-1}_t(p), e^{2\pi i t}),$$
in the parametrization of $S^1$ by $t \in [0,1] \mapsto e^{2 \pi i t}$. The following lemma is obvious.
\begin{lemma}\label{lem:pqisom}
$(Q, N) \cong (P_g, N)$ with the respective inclusion of $N$.
\end{lemma}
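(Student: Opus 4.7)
The plan is to construct an explicit fiber-preserving diffeomorphism $\Phi\colon P_g \to Q$ whose restriction to $N \subset P_g$ agrees with the embedding $\rho_g$. Since $D^2$ is contractible, $P_g$ is abstractly diffeomorphic to the trivial bundle $Q = M \times D^2$; the content of the lemma is to arrange the trivialization so that the Lagrangian subbundle $N \subset P_g$ lands exactly on $\rho_g(N) \subset Q$.

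First I would replace $g_t$ by a reparametrization (within its homotopy class rel.\ endpoints) so that $g_t = \id$ for $t \in [0, 1/4]$ and $g_t = g_1$ for $t \in [3/4, 1]$; this changes neither the diffeomorphism type of $(P_g, N)$ nor the embedding $\rho_g$ up to diffeotopy. On $M \times D^2_+$ I take $\Phi$ to be the identity, and on $M \times D^2_-$ I seek $\Phi$ of the form $(x, z) \mapsto (\tilde\psi(z)(x), z)$ for some smooth map $\tilde\psi \colon D^2_- \to \Diff(M)$. Matching across the imaginary axis forces $\tilde\psi((1-2t)i) = g_t^{-1}$ for $t \in [0,1]$, and the requirement $\Phi(N) = \rho_g(N)$ over the left semicircle forces $\tilde\psi(e^{2\pi i t}) = g_t^{-1}$ for $t \in [1/4, 3/4]$. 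Thanks to the normalization of $g_t$, the two prescriptions agree at the corners $z = \pm i$ (both give $\id$ and $g_1^{-1}$ respectively), so $\tilde\psi$ is well-defined on all of $\partial D^2_-$.

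Viewed as a loop in $\Diff(M)$ (or in $\Ham(M,\omega)$), the boundary data $\tilde\psi|_{\partial D^2_-}$ traces the path $u \mapsto g_u^{-1}$ once forward along the imaginary axis and once backward along the semicircle, so it has the form $\gamma \cdot \bar\gamma$ and is therefore null-homotopic. Hence $\tilde\psi$ extends smoothly to all of $D^2_-$, and the resulting $\Phi$ is a well-defined diffeomorphism with $\Phi|_N = \rho_g$ by construction. The only real obstacle is the extension step, which is resolved by the trace-retrace structure of the boundary loop once the initial reparametrization of $g_t$ has been made; all remaining checks are direct verifications at points of $\partial D^2$.
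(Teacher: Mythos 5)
Your proof is correct and follows essentially the same strategy as the paper's: an explicit fiber-preserving diffeomorphism between $P_g$ and $M\times D^2$ which is the identity over one half-disc and applies $g_t^{-1}$ fiberwise over the other, matched along the seam and checked against $\rho_g$ on the boundary. The only real difference is that the paper defines the fiberwise family on the whole half-disc by the closed formula $\psi(x,z)=(g_t^{-1}(x),z)$ with $t=(1-\Re z)/2$, which depends only on $\Re z$ and so requires no extension step, whereas you prescribe boundary values and extend them over the half-disc via the contractibility of the trace-retrace loop (after normalizing $g$ near its endpoints); both routes are valid.
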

{\it Proof}:
We note that $(P_g, N)$ depends only on the homotopy class of $g$ with fixed end points. First we show that $(Q, N)$ depends only on such homotopy class as well. Let $g' \sim g$ be homotopic to $g$ in $\PPC_L \Ham(M, \omega)$ with fixed end points. Thus
$$h_t =  g'_t \circ g^{-1}_t \in \Omega\Ham(M, \omega) \text{ is a contractible loop in } \Ham(M, \omega).$$
Let $\Psi^h : D^2 \to \Ham(M, \omega)$ be a homotopy of $\{h_t\}$ to $\id \in \Ham(M, \omega)$, where $\Psi^h(e^{2\pi i t}) = h_t$ and $\Psi^h(0) = \id$. Then we have
$$\Psi : Q \xto{\cong} Q : (x, z) \mapsto (\Psi^h_z(x), z), \text{ and thus } \rho_{g'} = \Psi^h \circ \rho_g : N \into Q.$$

We describe an alternative construction of $P_g$, together with the embedding of $N$. Consider the parametrization of $D^2$ as the unit disc and the map $\psi : M \times D^2_+ \to M \times D^2_+$ given by
$$\psi(x, z) = (g^{-1}_t(x), z), \text{ where } z = 1-2t + i s \in D^2_+.$$
We define $Q$ from the quotient
$$M \times D^2_+ \sqcup M \times D^2_- / \sim' : (x, 1-2t) \sim' (x, 1-2t) \text{ for } t \in [0,1].$$
Then the map
$$M \times D^2_+ \sqcup M \times D^2_- \xto{\psi \sqcup \id} M \times D^2_+ \sqcup M \times D^2_-$$
induces an isomorphism of $Q \xto\cong P_g$. The inclusions of $N$ obviously correspond.
\qed

\subsection{Definition of the Seidel map} \label{mapdefn}
We first recall the definition of the map $i_* : H_*(L) \to H_*(N)$ via Morse homology. Denoting the basis of the fibration $N$ as the set of points in the unit circle of the complex plane and  by $L_{\pm 1}$ the two fibers over $+1$ and $-1$, let $F \in C^\infty(N)$ satisfy the following:

\medskip
$F$ is a Morse function

\smallskip
$f_\pm := F|_{L_{\pm 1}}$ are Morse functions on $L$

\smallskip
$Crit(F) = Crit(f_+) \union Crit(f_-)$ and

\smallskip
$\max f_- + 1 < \min f_+$.

\medskip
We choose a metric $G$ on $N$ such that the pairs $(F, G)$ and $(f_\pm, g_\pm)$ are Morse-Smale pairs on $N$ and $L$ respectively, where $g_\pm$ are the restrictions of $G$ to $L_{\pm 1}$. Then the Morse complexes are well defined and compute the homologies of the respective manifolds. To define $i_*$ via Morse theory, we require that the pair $(F, G)$ satisfies the following:
\begin{quotation}

In a neighbourhood of the fibers over $\pm 1$, the fibration $N \to S^1$ is locally identified as a product, $L \times U_\pm$, where $\pm 1 \in U_\pm \subset S^1$.

\medskip
The restriction of $F$ to this neighbourhood is of the form $f_\pm + \varphi_\pm$, where $\varphi_\pm : U_\pm \to \RR$ is smooth with unique critical point at $\pm 1$.

\smallskip
The restriction of $G$ to this neighbourhood is a product metric.
\end{quotation}

It follows that the map induced by the inclusion of the set of critical points is an inclusion of Morse complexes:
$$i : MC_*(L; f_-, g_-) \into MC_*(N; F, G).$$
It induces the map  $i_* : H_*(L) \to H_*(N)$ in homology.

We now define the Seidel map in this setting. Suppose that $J$ is a tamed almost complex structure on $P_g$, where we may choose the symplectic structure on $P_g$ to be the pull-back of the product symplectic structure on $Q$ via the isomorphism given by Lemma \ref{lem:pqisom}. We also suppose that $J$ is \emph{compatible with the fibration}, namely:

\medskip
the projection $\pi : P_g \to D^2$ is pseudo-holomorphic, and

\smallskip
$J$ restricts to compatible almost complex structures on the fibers.

\medskip
We consider the linear clusters in $(P_g, N)$ for which exactly one of the pseudo-holomorphic discs represents a section class in $\pi_2(P_g, N)$ and all other discs represent fiber classes. Given $x_- \in Crit(f_-)$ and $y_+ \in Crit(f_+)$, this amounts to consider the moduli spaces $\MMC (P_g, N; \sigma; F, G, J; x_-, y_+)$ where $|\sigma|$ is a section class.
Let $|\cdot |^L$ be the Morse index  in $L$ and $|\cdot|^N$ that in $N$, then we have
$$|x_-|^N = |x_-|^L \text{ and } |y_+|^N = |y_+|^L + 1.$$
Then by \eqref{eq:modulidim}
$$\dim_\RR \MMC (P_g, N; \sigma; F, G, J; x_-, y_+) = |x_-|^N - |y_+|^N + \mu_N(|\sigma|) - 1 = |x_-|^L - |y_+|^L + \mu^v(|\sigma|).$$
\begin{defn}
Let $\sigma$ denote a section class in $\pi_2(P, N)$ and $\sigma_0$ a particular choice of reference section class. Then $B := \sigma - \sigma_0$ is a fiber class and the chain level \emph{geometric Seidel map} is
$$\Psi_L(g, \sigma_0) : Crit(f_-) \tensor \Lambda_{L-} \to Crit(f_+) \tensor \Lambda_{L_+}$$
$$\Psi_L(g, \sigma_0)(x_-) := \sum_{B, y_+} \#{\mathcal{M}}(P_g, N; \sigma_0 + B; F, G, J_P; x_-, y_+) e^B y_+,$$
where the coefficients counts (in $\ZZ$ or $\ZZ_2$, see remark \ref{rmk:coefficients}) the zero dimensional moduli spaces.
\end{defn}

\begin{remark}\label{rmk:coefficients}
\rm{We note that for the purpose of the main theorem, with the assumption of weak exactness on $L$, standard transversality provides that the moduli spaces above are smooth manifolds. In general, when we allow $(M, L)$ to be monotone with minimal Maslov index $2$, the transversality arguments in \cite{BiranCornea} can be adapted so that the moduli spaces are again smooth with the expected dimension in dimension $\varleq 1$. The main point in the adaptation is that the disc that represents a section class is necessarily simple.

For the counting, one can always work over $\ZZ_2$. On the other hand, if $L$ is relatively spin and the map $g_1$ preserves the chosen relative spin structure, we may use $\ZZ$ or $\QQ$ as coefficients.}
\end{remark}

\begin{lemma}

$\Psi_L(g, \sigma_0)$ is a chain map of degree $\mu^v(\sigma_0)$
and the induced map on $QH_*(M, L)$, does not depend on the choice of generic data.
\end{lemma}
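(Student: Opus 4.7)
My plan is to treat the three assertions — degree $\mu^v(\sigma_0)$, the chain-map identity, and the independence of the induced map on $QH_*(M,L)$ from the auxiliary data — by the usual boundary analysis of the relevant moduli spaces, with one non-standard geometric step specific to the product behaviour of $(F,G,J)$ near $L_\pm$.

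For the degree, I would substitute $\mu_N(\sigma)=\mu^v(\sigma)+2$ on section classes and $\mu_N(B)=\mu_L(B)$ on fiber classes, together with $|x_-|^N=|x_-|^L$ and $|y_+|^N=|y_+|^L+1$, into formula~\eqref{eq:modulidim} applied to $(P_g,N)$, obtaining
\[
\dim\MMC(P_g,N;\sigma_0+B;F,G,J;x_-,y_+)=|x_-|^L-|y_+|^L+\mu^v(\sigma_0)+\mu_L(B).
\]
A zero-dimensional contribution $x_-\mapsto e^B y_+$ therefore shifts the Novikov grading (with $\deg e^B=-\mu_L(B)$) by exactly $\mu^v(\sigma_0)$, which is the degree claim.

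For the chain-map property, I would study the boundary of the one-dimensional moduli spaces. The type~(2) bubbling and type~(3) shortening components cancel in pairs as in the construction of the pearl differential of \cite{BiranCornea,CorneaLalonde}, once one sums over fiber-class decompositions with fixed total class; since the unique section-class disc is simple, standard transversality applies (cf.\ Remark~\ref{rmk:coefficients}). The remaining type~(1) boundary is a Morse breaking at a critical point of $F$, which by construction lies in $Crit(f_-)\cup Crit(f_+)$. The essential geometric observation is that in the product neighbourhoods of $L_\pm$ where $F=f_\pm+\varphi_\pm$ and $G$ is a product metric, the negative gradient vector field of $F$ is tangent to $L_\pm$, so each $L_\pm$ is both forward- and backward-invariant; moreover $\varphi_-$ has a minimum at $-1$ and $\varphi_+$ a maximum at $+1$, so the unstable manifold of every $z_-\in Crit(f_-)$ and the stable manifold of every $z_+\in Crit(f_+)$ are contained in $L_-$ and $L_+$, respectively. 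Combined with the fact that fiber-class discs project under the $J$-holomorphic map $\pi:P_g\to D^2$ to constants and hence lie in a single fiber, a propagation argument along the chain of alternating flow lines and fiber-class discs shows (i) that a pearl-only sub-cluster from $x_-\in L_-$ to a breaking point $z_-\in L_-$ lies entirely in $L_-$, and (ii) that a pearl-only sub-cluster from $z_+\in L_+$ to $y_+\in L_+$ lies entirely in $L_+$, thereby identifying them with genuine pearl paths of $(M,L)$. The two opposite placements of the section-class disc are excluded by the same invariance: a pearl-only sub-cluster from $z_-\in L_-$ or from $x_-\in L_-$ forward to $y_+\in L_+$ would have to remain in $L_-$, contradicting its target. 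Summing the surviving contributions yields
\[
\partial_{Pearl}^{+}\circ\Psi_L(g,\sigma_0)-\Psi_L(g,\sigma_0)\circ\partial_{Pearl}^{-}=0
\]
(with the coherent signs of Remark~\ref{rmk:coefficients} in the relatively spin case), which is the chain-map identity.

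For independence of generic data, I would join two admissible triples $(F^{\alpha},G^{\alpha},J^{\alpha})$, $\alpha=0,1$, by a generic one-parameter family $(F^{s},G^{s},J^{s})$ preserving the product structure near $L_\pm$, and form the parametrized moduli spaces $\bigsqcup_{s\in[0,1]}\MMC(P_g,N;\sigma_0+B;F^{s},G^{s},J^{s};x_-,y_+)$. The codimension-one boundary of their one-dimensional components consists of the fibers over $s=0,1$, contributing the difference of the two Seidel chain maps, together with interior Morse breakings at $z_\pm\in Crit(f_\pm^{s})$, which by the same geometric analysis assemble into a chain homotopy $K$ satisfying $\Psi_L^{0}-\Psi_L^{1}=\partial_{Pearl}^{+}\circ K+K\circ\partial_{Pearl}^{-}$; bubbling and shortening once again cancel in pairs. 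Hence the two chain maps descend to the same map on $QH_*(M,L)$. The main obstacle throughout is precisely the geometric step above: one must ensure that Morse breakings at $z_\pm\in L_\pm$ decompose each configuration into a pearl sub-cluster of $L$ and a Seidel sub-cluster, with no spurious contributions from fiber-class discs drifting into intermediate fibers. This rests entirely on the carefully arranged product structure of $(F,G)$ near $L_\pm$ together with the $J$-holomorphicity of $\pi$.
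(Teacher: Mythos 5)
The degree computation and the chain-map argument are correct and follow essentially the paper's route; in fact you supply more detail than the paper does (the paper simply invokes compactness of the section-class moduli spaces and compactification by broken Morse flow lines), and your geometric observation --- that $\varphi_-$ has a minimum at $-1$, $\varphi_+$ a maximum at $+1$, that $L_{\pm1}$ are invariant under the negative gradient flow in the product neighbourhoods, and that fiber-class $J$-holomorphic discs are confined to single fibers because $\pi$ is pseudo-holomorphic of degree $0$ on them --- is exactly the right justification for why every Morse breaking splits a configuration into a genuine pearl sub-cluster of $(M,L)$ and a Seidel sub-cluster, with no stray contributions.

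The independence-of-data part, however, has a genuine gap. You form the parametrized moduli spaces $\bigsqcup_{s}\MMC(P_g,N;\sigma_0+B;F^{s},G^{s},J^{s};x_-,y_+)$ with \emph{fixed} asymptotics $x_-$, $y_+$, and assert a chain homotopy $\Psi_L^{0}-\Psi_L^{1}=\partial_{Pearl}^{+}\circ K+K\circ\partial_{Pearl}^{-}$. But the two admissible triples have different Morse functions, so $\Psi_L^{0}$ and $\Psi_L^{1}$ are maps between \emph{different} complexes, generated by $Crit(f^0_\pm)$ and $Crit(f^1_\pm)$ respectively: the displayed identity does not typecheck, and the parametrized moduli space with fixed ends $x_-,y_+$ is only defined if these remain critical points of every $F^s$, which is not the case for generic data (and a bifurcation analysis of births/deaths of critical points along the family is a substantially harder route that you do not carry out). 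What must actually be proved is that $\Phi_+\circ\Psi_{L_0}(g,\sigma_0)$ and $\Psi_{L_1}(g,\sigma_0)\circ\Phi_-$ agree in homology, where $\Phi_\pm$ are the comparison quasi-isomorphisms between the pearl complexes for the two data sets. The paper achieves this by working on the product fibration $(P_g,N)\times[0,1]$ with a single Morse function $\widetilde F$ whose critical set is $Crit(\widetilde F_0)\cup Crit(\widetilde F_1)$ and whose restrictions to the four fibers $L_{i,\pm}$ are interleaved in value, namely $\min f_{0,+}>\max f_{1,+}+1>\min f_{1,+}>\max f_{0,-}+1>\min f_{0,-}>\max f_{1,-}+1$; the resulting degree-$(\mu^v(\sigma_0)+1)$ map $\Sigma$ from $Crit(\widetilde F_{0,-})\tensor\Lambda_L$ to $Crit(\widetilde F_{1,+})\tensor\Lambda_L$ then satisfies $\Sigma\circ\partial_{Pearl}-\partial_{Pearl}\circ\Sigma=\Phi_+\circ\Psi_{L_0}-\Psi_{L_1}\circ\Phi_-$, because the four possible locations of a Morse breaking produce exactly these four compositions. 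You should either adopt this construction or split your deformation into a $J$-only deformation (where your parametrized argument is legitimate) followed by a comparison-morphism argument for the change of $(F,G)$.
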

{\it Proof :}
Because $(M, L)$ is weakly exact, the moduli spaces of holomorphic discs in $P_g$ with boundary in $N$, representing a section class, are compact.
Let $B$ denote a fiber class in $\pi_2(P_g, N)$. Thus, $\MMC(P_g, N; \sigma_0 + B; F, G, J_P; x_-, y_+)$ is compactified by broken Morse flow lines. From this, it follows that $\Psi_L(g, \sigma_0)$ is a chain map.
Because $\deg e^B = -\mu_L(B)$, by the dimension computation of the moduli spaces, we see that the degree of the map is $\mu^v(\sigma_0)$.

Now we show that it does not depend on the choices of generic data $(F, G, J_P)$.
Suppose that we have two triples of Morse functions, metrics and compatible almost complex structures: $(F_i, G_i, J_{P, i})$ for $i = 0, 1$. Consider the fibration $(\widetilde P_g, \widetilde N) = (P_g, N) \times [0,1]$ and endow it with a triple $(\widetilde F, \widetilde G, \widetilde J)$  where $\widetilde F$ is a smooth Morse function on $\widetilde N$,  $\widetilde G$ is a metric on $\widetilde N$ that  restricts to $G_i$ on $P_g \times \{i\}$ and where $\widetilde J$ is a smooth family of $\omega$-compatible and fibration-compatible complex structures on $P_g \times \{t\}$, which connects $J_{P, i}$ on $P \times \{i\}$. Assume that
$\widetilde F$ coincides with $\widetilde F_i := A_i F_i + C_i$ on $(P_g, N) \times \{i\}$ for some constants $A_i$ and $C_i$, $i = 0, 1$ chosen so that

$$\min f_{0, +} > \max f_{1, +} + 1 > \min f_{1, +} > \max f_{0, -} + 1 > \min f_{0, -} > \max f_{1, -} + 1.$$

 Here $f_{i,\pm}$ is the restriction of $\widetilde F$ to the fiber $L_{i,\pm}$ of $N \times [0,1] \to S^1 \times [0,1]$ above the point $(\pm 1, i)$. Assume moreover as usual that $Crit(\widetilde F) = Crit(\widetilde F_0) \union Crit(\widetilde F_1)$.

In general, we consider the configuration of a linear cluster in $\widetilde P$ connecting $x_- \in L_{0, -}$ to $y_+ \in L_{1, +}$ via negative gradient lines and holomorphic discs $u_i : (D^2, S^1) \to (P_g, N) \times \{t_i\}$ for some $t_i \in [0,1]$, the total class of the $[u_i]$'s being a section class $\sigma \in \pi_2(\widetilde P, \widetilde N) \cong \pi_2(P, N)$. Exacly like in the definition of $\Psi_L$, we can adapt the transversality argument in \cite{BiranCornea} and see that the moduli spaces of such configurations are smooth of the expected dimension when the dimension is $\varleq 1$. Let $\mathcal{M}(\widetilde P, \widetilde N; \sigma; \widetilde F, \widetilde G, \widetilde J; x_-, y_+)$ denote the moduli space of such linear clusters and write $\sigma = \sigma_0 + B$, where $\sigma_0$ is a chosen reference section class and $B$ is a fiber class.
We define $\Sigma : Crit(\widetilde F_{0,-}) \tensor \Lambda_L \to Crit(\widetilde F_{1, +}) \tensor \Lambda_L$ by
$$\Sigma(x_{-}; \sigma_0) := \sum_{B, y_+} \# \mathcal{M}(\widetilde P, \widetilde N; \sigma_0 + B; \widetilde F, \widetilde G, \widetilde J; x_-, y_+ ) e^B y_+,$$
where the counting $\# \mathcal{M}$ is for the moduli spaces of expected dimension $0$, i.e.
$$|x_-|^L - |y_+|^L + \mu^v(\sigma_0) + \mu_L(B) + 1 = 0.$$
It follows that $\Sigma(\cdot; \sigma_0)$ is of degree $\mu^v(\sigma_0) + 1$.

Now considering the boundary components of the moduli spaces with dimension $1$, we see that it consists of the following four types of configurations, corresponding precisely to the case when the breaking of Morse flowlines happens on one of the $L_{i, \pm}$'s.

It is then obvious that:
\begin{enumerate}
\item the breaking on $L_{0, -}$ corresponds to $\Sigma(\cdot; \sigma_0) \circ \partial_{Pearl}$
\item the breaking on $L_{1, +}$ corresponds to $\partial_{Pearl} \circ \Sigma(\cdot; \sigma_0)$
\item the breaking on $L_{0, +}$ corresponds to $\Phi_{+} \circ \Psi_{L_0}(g, \sigma_0)$ and
\item the breaking on $L_{1, -}$ corresponds to $\Psi_{L_1}(g, \sigma_0) \circ \Phi_{-}$
\end{enumerate}
where
$$\Phi_{\pm} : Crit(f_{0, \pm}) \tensor \Lambda_L \to Crit(f_{1, \pm}) \tensor \Lambda_L$$
are the comparison maps between quantum homologies of $L$ with different choices of $(f, \rho, J)$, which are quasi-isomorphisms. We can then write down
$$\Sigma(\cdot; \sigma_0) \circ \partial_{Pearl} - \partial_{Pearl} \circ \Sigma(\cdot; \sigma_0) = \Phi_+ \circ \Psi_{L_0}(g, \sigma_0) - \Psi_{L_1}(g, \sigma_0) \circ \Phi_-$$
and it follows that $\Phi_+ \circ \Psi_{L_0}(g, \sigma_0)$ and $\Psi_{L_1}(g, \sigma_0) \circ \Phi_-$ induce the same maps on quantum homologies.
\qed

We note that the above lemma implies that the Seidel map $\Psi(g, \sigma_0)$ does not depend on the homotopy class of $g$, because the construction for $g' \sim g$ gives the same bundles $(P, N)$ with a different set of data $(F, G, J_P)$.

\section{Proof of the theorem}\label{sec:proofoftheorem}

We postpone to the Section 4 of this paper the proof of Proposition~\ref{prop:identification} stating that our geometric Seidel morphism coincides with our analytical Seidel morphism defined in \cite{HuLalonde} via a PSS-isomorphism. Since the analytical morphism is evidently an isomorphism by contruction, we get the following corollary that we will use in the present section:

\begin{corollary}\label{cor:iso}
The map $\Psi_L(g, \sigma_0)$ is an isomorphism of quantum homology of $(M, L)$.
\qed
\end{corollary}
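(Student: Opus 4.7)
\emph{Proof plan.} My strategy is to reduce the statement to the analytical version of the relative Seidel morphism constructed in \cite{HuLalonde}, whose invertibility is essentially built into its definition, and then to transport this invertibility across the identification provided by Proposition~\ref{prop:identification}.

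First, I would recall that \cite{HuLalonde} defines an \emph{analytical} relative Seidel morphism
$$S_L(g, \sigma_0) : FH_*(M, L) \to FH_*(M, L)$$
directly on Floer homology of connecting Hamiltonian paths, by counting pseudo-holomorphic sections of $(P_g, N) \to (D^2, S^1)$ in prescribed section classes. By construction this morphism is functorial under concatenation of paths in $\PPC_L\Ham(M, \omega)$: concatenating $g$ with its time-reverse $\bar g$ yields a path homotopic (with fixed endpoints) to the constant path at $\id$, whose relative Seidel morphism is, up to an obvious shift of reference section class, the identity on $FH_*(M, L)$. Hence $S_L(g, \sigma_0)$ admits a two-sided inverse $S_L(\bar g, \bar\sigma_0)$ for a suitably paired reference class $\bar\sigma_0$, and is in particular an isomorphism.

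Next, under our weak-exactness hypothesis the PSS-type comparison map $\pss \co QH_*(M, L) \xto{\cong} FH_*(M, L)$ of Biran--Cornea \cite{BiranCornea} is an isomorphism. Proposition~\ref{prop:identification}, whose proof is relegated to \S 4, asserts the commutation
$$\pss \circ \Psi_L(g, \sigma_0) = S_L(g, \sigma_0) \circ \pss.$$
Combining the two facts, the map induced on quantum homology by $\Psi_L(g, \sigma_0)$ equals $\pss^{-1} \circ S_L(g, \sigma_0) \circ \pss$, a composition of three isomorphisms, hence itself an isomorphism; this is the corollary.

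The only nontrivial ingredient is Proposition~\ref{prop:identification}, and the corollary itself is a formal consequence. The hard part, deferred to \S 4, is to match the two a priori different objects being counted --- mixed holomorphic-disc and negative-gradient-line clusters in $(P_g, N)$ with exactly one section disc, versus genuine Floer sections of $P_g \to D^2$ with Lagrangian boundary on $N$ --- via a standard but delicate one-parameter interpolation and gluing argument relating the Hamiltonian perturbation of the Floer picture to the cluster picture.
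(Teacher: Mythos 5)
Your proposal is correct and follows essentially the same route as the paper: Corollary~\ref{cor:iso} is deduced formally from Proposition~\ref{prop:identification} together with the invertibility of the analytical relative Seidel morphism and of the PSS isomorphisms. The only (harmless) discrepancy is your description of the analytical morphism: in \cite{HuLalonde} and in \S 4 it is defined not by counting sections but as the map $[l,w]\mapsto[l^g,w^g]$ induced by the lifted action on $\tilde{\mc P}_LM$, a chain-level bijection of generators and hence ``evidently an isomorphism by construction'' --- your concatenation/time-reversal argument reaches the same conclusion by a slightly longer route.
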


As shown in the introduction, the main theorem (Theorem \ref{thm:main}) follows from
\begin{lemma}
Let $(M, L)$ be symplectically aspherical. Then $i_* : H_*(L) \to H_*(N)$ is injective.
\end{lemma}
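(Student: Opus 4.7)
My plan is to leverage the fact that $\Psi_L(g,\sigma_0)$ is an isomorphism (Corollary~\ref{cor:iso}) by exhibiting a chain-level factorization $\Psi_L(g,\sigma_0) = q \circ i$ through the Morse complex of $N$. Concretely, I will build a chain map $q \co MC_*(N;F,G) \to MC_*(L_+;f_+,g_+) \tensor \Lambda_L$ whose composition with the Morse-complex inclusion $i$ of the previous subsection recovers $\Psi_L(g,\sigma_0)$ on chains. Passing to homology will then give $q_* \circ i_* = \Psi_L(g,\sigma_0)_*$, an isomorphism, forcing $i_*$ to be injective. The map $q$ is defined by counting mixed configurations: for $z \in Crit(F)$, set
\[
q(z) := \sum_{y_+,\,\sigma} \# \mathcal{N}(z, y_+; \sigma)\; e^{\sigma - \sigma_0}\; y_+,
\]
where $\mathcal{N}(z,y_+;\sigma)$ is the moduli space of triples $(l_0, u, l_1)$ with $l_0 \co (-\infty,0] \to N$ a $-\nabla_G F$-trajectory asymptotic to $z$ and satisfying $l_0(0) = u(-1)$; $u$ a $J$-holomorphic section disc in class $\sigma$; and $l_1 \co [0,\infty) \to N$ a $-\nabla_G F$-trajectory with $l_1(0) = u(+1)$ asymptotic to $y_+$. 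We count zero-dimensional components.

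To prove that $q$ is a chain map, I will examine the codimension-one strata of the Gromov compactification of the one-dimensional components of $\mathcal{N}$. Bubbling off the section disc is excluded, because weak exactness of $(M,L)$ forbids nontrivial $J$-discs in the fibers and the symplectic asphericity of $(M,\omega)$ forbids $J$-spheres in $P_g$; moreover the two Morse legs cannot collapse since each is asymptotic at one end. Hence only Morse-theoretic breakings remain: a break at the $z$-end contributes $q \circ \partial^N$, and a break at the $y_+$-end contributes $\partial^{L_+}\circ q$, where the latter identification uses that $l_1 \subset W^s(y_+;-\nabla_G F) \subset L_+$ by the product structure of $(F,G)$ near $L_+$, so $l_1$ restricts to a $-\nabla_{g_+} f_+$-trajectory in $L_+$. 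Cancellation of these boundary contributions yields $q \circ \partial^N = \partial^{L_+}\circ q$.

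The factorization $q \circ i = \Psi_L(g,\sigma_0)$ on chains follows from the parallel observation on the other side: for every $x_- \in Crit(f_-)$, one has $W^u(x_-;-\nabla_G F) \subset L_-$. Indeed, in the product chart near $L_-$ we have $F = f_- + \varphi_-$ with $\varphi_-$ having a non-degenerate minimum at $-1 \in U_-$, so $-1$ has trivial unstable manifold under $-\nabla \varphi_-$ and every unstable direction of $-\nabla_G F$ at $x_-$ is tangent to $L_-$. Consequently the leg $l_0$ in $\mathcal{N}(x_-, y_+;\sigma)$ is forced to lie in $L_-$ and to restrict to a $-\nabla_{g_-} f_-$-trajectory, so $\mathcal{N}(x_-, y_+;\sigma)$ coincides exactly with the pearl moduli defining $\Psi_L(g,\sigma_0)(x_-)$. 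The main technical obstacle is in the chain-map step: one must check that $\mathcal{N}$ enjoys the same transversality and compactness package as the Seidel pearl moduli, even though the initial Morse leg is now allowed to wander throughout $N$ before reaching $u(-1) \in L_-$; once that analytic foundation is secured, the remainder of the argument is essentially formal.
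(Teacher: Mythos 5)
Your argument is correct and coincides with the paper's: your map $q$ is exactly the auxiliary map $\Sigma(g,\sigma_0)$ constructed in the paper (same moduli spaces with input an arbitrary critical point of $F$ and output in $Crit(f_+)$), and the paper likewise deduces injectivity of $i_*$ from the facts that $\Sigma$ restricts to $\Psi_L(g,\sigma_0)$ on $MC_*(L_-)$ and annihilates boundaries coming from $Crit(f_+)$, using Corollary~\ref{cor:iso}. The only cosmetic difference is that you organize this as a chain-level factorization $q\circ i=\Psi_L(g,\sigma_0)$, whereas the paper runs the identical computation as a proof by contradiction.
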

{\it Proof: }
Note that the Morse theoretical definition of $i_*$ is defined by the inclusion of chain complexes $MC_*(L) \into MC_*(N)$. Since $(M, L)$ is symplectically aspherical, $QH_*(M, L) = H_*(L; \Lambda_L)$ is the homology of $(MC_*(L) \tensor \Lambda_L, \partial_{Morse})$. The Seidel map $\Psi_L(g, \sigma_0)$ then defines an isomorphism of $QH_*(M, L)$.

By the universal coefficient theorem, the group $H_*(L) \tensor \Lambda_L$ is a subgroup of $QH_*(M, L)$.
We prove by contradiction. Suppose that $\ker i_* \neq \{0\}$, then there exists $\alpha \neq 0 \in \ker i_*$ such that it is represented by $\sum_i a_i x_{i, -} \in MC_*(L_-)$ and
$$\sum_i a_i x_{i, -} = \partial_{Morse}^N \sum_j b_j y_{j, +} \text{ for some } y_{j,+} \in Crit(f_+),$$
where $\partial_{Morse}^N$ denote the boundary operator in Morse homology of $N$. Since $\Psi_L$ is an isomorphism on $QH_*(M, L)$, we have $\Psi_L(\sum_i a_i x_{i, -}) \neq 0$.

We work on chain level. Let $y \in Crit(F)$ and $z_+ \in Crit(f_+) \subset Crit(F)$ and consider the moduli space for fiber classes $B$:
$${\MMC}(P_g, N; \sigma_0 + B; F, G, J_P; y, z_+),$$
which has expected dimension
$$|y|^N - |z_+|^N + \mu_N(\sigma_0 + B) - 1 = |y|^N - |z_+|^L + \mu^v(\sigma_0) + \mu_L(B).$$
Let $\Sigma(g, \sigma_0)$ denote the map $MC_*(N) \mapsto MC_*(L) \tensor \Lambda_L$ defined by
$$y \mapsto \sum_{B, z_+} \#{\MMC}(P_g, N; \sigma_0 + B; F, G, J_P; y, z_+) e^B z_+,$$
where the coefficients count dimension $0$ moduli spaces.
We note that when restricted to $MC_*(L_-)$, $\Sigma(g, \sigma_0)$ coincides with $\Psi_L$.

Consider now the moduli spaces ${\MMC}(P_g, N; \sigma_0 + B; F, G, J_P; y, z_+)$ of dimension $1$,
which are compactified by the breakings in the Morse flowlines in $N$, on $L_\pm$. By the choices made for $(F, G)$, we write down the boundary components for $y = y_+$:
\begin{itemize}
\item ${\MMC}_{Morse}(y_+, y'_+) \times {\MMC}(P_g, N; \sigma_0 + B; F, G, J_P; y'_+, z_+)$
\item ${\MMC}_{Morse}(y_+, x_-) \times {\MMC}(P_g, N; \sigma_0 + B; F, G, J_P; x_-, z_+)$
\end{itemize}
where ${\MMC}_{Morse}$ denotes the moduli space of Morse trajectories (in $N$) connecting the two critical points. It follows that
$$\Sigma(g, \sigma_0)\circ \partial_{Morse}^N(y_+) = 0.$$
Thus
$$\Psi_L(\sum_i a_i x_{i, -}) = \Sigma(g, \sigma_0)(\sum_i a_i x_{i, -}) = \sum_j b_j \Sigma(g, \sigma_0)\circ \partial_{Morse}^N(y_{j, +}) = 0.$$
This is a contradiction.
\qed

\section{Correspondence between the analytic and geometric Seidel maps} \label{identify}
We first recall the construction of the analytic Seidel map and restate it in the current geometric setting. Then we show that the two constructions coincide. In a way similar to Seidel \cite{Seidel}, the comparison uses the PSS isomorphism in the Lagrangian setting, for which we will adapt the construction of Biran-Cornea \cite{BiranCornea} or Hu-Lalonde \cite{HuLalonde} using Hamiltonian fibrations.

For the purpose of this section, we reparametrize the half discs $D_\pm^2$ as
$$D_\pm^2 = \left\{z \in \bb C : \left|z - \frac{i}{2} \right| \varleq \frac{1}{2} \text{ and } \pm \Re z > 0 \right\},$$
and let
$\partial_0 = D^2_+\inter i \RR$ and $\partial_+ = D^2_+ \inter \{z : |z - \frac{i}{2}| = \frac{1}{2}\}$.
Let $\mathcal P_LM$ be the space of contractible paths in $M$ with both ends on $L$ and $\tilde{\mathcal P}_LM$ the covering space whose elements are equivalent classes $[l, w]$ of pairs $(l, w)$
$$l: ([0,1],\{0,1\}) \to (M, L) \text{ and } w: (D^2_+, \partial_0, \partial_+) \to (M, L, l),$$
where $l(t) = w(i(1-t))$.
The equivalence relation is the following
$$(l, w) \sim (l', w') \iff l = l' \text{ and } w \sim_{\partial_+} w'.$$
The Floer homology of $(M, L)$ is constructed from a choice of a time-dependent Hamiltonian function $H$ and a compatible almost complex structure $J$ on $M$. The action functional as well as the metric are then defined as:
$$a_{H}([l, w]) = -\int_{D^2_+}w^*\omega + \int_{[0,1]} H_t(l(t)) dt,$$
and
$$(\xi, \eta)_{J} = \int_{[0,1]} \omega(\xi(t), J_t\eta(t)) dt, \text{ for } \xi, \eta \in C^\infty(l^*TM).$$
Then the Floer homology $FH_*(M, L; H, J)$ can be seen as the Morse homology of $\tilde P_LM$ for $a_H$ in the metric $(,)_J$. The differential $da_H$ and $(,)_J$ are well defined on $P_LM$ already and the equation of negative gradient flow lines can be written in $P_L M$ as well:
\begin{equation}\label{Floerequation}
\left\{\begin{matrix}
\frac{\partial u}{\partial s} + J_t(u)
\left(\frac{\partial u}{\partial t} - X_{H_t}(u)\right) = 0 & \text{ for all }
(s, t) \in {\mathbb R} \times [0,1], \\
u|_{{\mathbb R} \times \{0, 1\}} \subset L
\end{matrix}\right.
\end{equation}


The action of $\mc P_L\Ham(M, \omega)$ on $\mc P_L M$ lifts to an action of an extension group $\tilde {\mc P}_L\Ham(M, \omega)$ on $\tilde{\mc P}_L M$. An element of $\tilde {\mc P}_L\Ham(M, \omega)$ can be represented as $(g, \tilde g)$, where $g \in \mc P_L\Ham(M, \omega)$ and $\tilde g$ is determined by the action of $(g, \tilde g)$ on point elements $[p, p] \in \tilde{\mc P}_L M$ in the following fashion. Let's denote $(g, \tilde g)\circ [p, p]$ by $[p^g, w_p^g]$. Let $[l, w] \in \tilde{\mc P}_LM$ and $[l^g, w^g] = (g, \tilde g) \circ [l, w]$, then
$$l^g(t) = g_t \circ l(t)$$
and $w^g$ is defined as follows (we write $w^g$ instead of $w^{(g, \tilde g)}$ to simplify the notation). Consider $w$ as a homotopy from a constant path $l(0)$ to $l$, i.e. it is spanned by a one-parameter family $\alpha_{\tau \in [\frac{1}{2}, 1]}$ with $\alpha_{\frac{1}{2}}(t) = l(0)$ and $\alpha_1(t) = l(t)$. The action of $g$ on $w$ is then the strip obtained as the image $g_t(\alpha_\tau(t))$. Now we define the action of $(g, \tilde g)$ on $w$ as the half disk obtained by gluing the above strip along its boundary at $\tau = \frac{1}{2}$ with $w_{l(0)}^g$. This defines $w^g$.

Now  the push-forward by $g$ of $H$ and  $J$ is given by the pair $(H^g, J^g)$:
$$H^g(t, x) = H(t, g_t^{-1}(x)) + K(t, x) \text{ and } J^g_t = dg_t \circ J_t \circ dg^{-1}_t.$$
where $K(t, x)$ is the Hamiltonian function generating $g$. The lifted action of $(g, \tilde g)$ on $\tilde P_LM$ then defines an isomorphism of Floer homologies
$$\Psi_{\tilde g} : FH_*(M, L; H, J) \to FH_*(M, L; H^g, J^g) : [l, w] \mapsto [l^g, w^g]$$
which gives the relative Seidel map.

We consider the PSS isomorphism in the fibration setting. Let $Z_-$ be the half-disc with infinite end:
$$Z_- := D^2_- \union_{\partial_0} (\RR^+ \times i[0, 1]) \subset \CC,$$
and choose a $C^\infty$  $z$-dependent ($z \in \Sigma_-$) Hamiltonian function and a compatible almost complex structure $(\mb H, \mb J)$ on $M$ such that
\begin{itemize}
\item $(\mb H, \mb J)|_{z \in D^2_-} = (0, J_0)$ and
\item $(\mb H, \mb J)|_{\Re z > 1} = (H, J)$,
\end{itemize}
where $J_0$ is a generic compatible almost complex structure.
The equation is
\begin{equation}\label{PSSequation}
\left\{\begin{matrix}
\frac{\partial u}{\partial s} + J_z(u)
\left(\frac{\partial u}{\partial t} - X_{H_z}(u)\right) = 0 & \text{ for all }
z = s + it \in Z_-, \\
u|_{\partial Z_-} \subset L
\end{matrix}\right.
\end{equation}
Then finite energy solutions converge to critical points $l$ of $da_H$ as $s \to \infty$. We may mark the point $\frac{1}{2}(1, -i)$ and consider the evaluation map from the moduli space of the solutions to \eqref{PSSequation}. The PSS isomorphism $QH_* \to FH_*$ is then defined by counting the intersections with cycles in $L_- := L \times \left\{\frac{1}{2}(-1, i)\right\}$ of the moduli space under the evaluation.

The equation \eqref{PSSequation} can be written also as the $\bar\partial$-equation for a holomorphic section in the fibration over $Z_-$ as follows.
Let $P_- = M \times Z_-$ and consider the symplectic form:
$$\Omega_- := \kappa(\omega+ d H_z \wedge dt) + ds \wedge dt.$$
Associated to this symplectic structure, the \emph{symplectic connection} is given by
$$Hor(x, z) = \Span_{(x, z)}\left(\frac{\partial}{\partial s}, \frac{\partial}{\partial t} - X_{H_z}\right).$$
The almost complex structure $\tilde J_-$ on $P_-$ is given by $J_z$ along the fibers and
$$\tilde J_-\left(\frac{\partial}{\partial s}\right) = \frac{\partial}{\partial t} - X_{H_z}.$$
Then the $\bar\partial$-equation for $\tilde J_-$-holomorphic sections $\sigma_- : Z_- \to P_-$ with boundary on $L \times \partial Z_-$ coincides with \eqref{PSSequation} and the graph of a solution of \eqref{PSSequation} gives a $\tilde J_-$-holomorphic section. Thus we obtain the geometric version of the PSS isomorphism in one direction.

Let $\tau : \CC \to \CC$ be the anti-linear map reversing the real part:
$$\tau(s + it) = -s + it.$$
Define $D_+ = \tau(D_-)$, $Z_+ = \tau(Z_-)$ and $P_+ = M \times Z_+$.
We first carry out the PSS fibration construction also for $(H^g, J^g)$, obtaining on $P_+$ an almost complex structure $\tilde J_+^g$ tamed by the symplectic form $\Omega_+^g$, where
$$\Omega_+^g = \kappa (\omega + dH^g_z \wedge dt) + ds\wedge dt \text{ when } s < -1,$$
and equals to the product form over $D^2_+$.
In this case, pseudoholomorphic sections with finite energy converge to critical points $l^g$ of $da_{H^g}$ as $s \to -\infty$. Then counting of moduli spaces of sections gives the other direction $FH_* \to QH_*$ of the PSS isomorphism.

Now the statement we will show is
\begin{prop}\label{prop:identification}
The composition $$QH_*(M, L) \xto{PSS} FH_*(M, L) \xto{\Psi_{\tilde g}} FH_*(M, L) \xto{PSS} QH_*(M, L)$$ coincides with the geometric Seidel's map $\Psi_L(g, \sigma_0)$ for an appropriately chosen $\sigma_0$.
\end{prop}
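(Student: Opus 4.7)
The plan is to realize both sides of the claimed equality as signed counts of pseudo-holomorphic sections of suitably related Hamiltonian fibrations over Riemann surfaces with Lagrangian boundary conditions, and then to exhibit a cobordism between the two moduli spaces via a neck-gluing argument in the spirit of Seidel's original identification in the closed case.

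First, I would reinterpret the composition $\mathrm{PSS} \circ \Psi_{\tilde g} \circ \mathrm{PSS}$ geometrically. The left $\mathrm{PSS}$ counts pseudo-holomorphic sections of $P_-$ over $Z_-$, the middle Seidel factor records the lifted action of $(g, \tilde g)$ on $\tilde{\mc P}_L M$, and the right $\mathrm{PSS}$ counts sections of $P_+$ over $Z_+$. The crucial observation is that $\Psi_{\tilde g}$ can be implemented geometrically by identifying the outgoing Floer end of the $Z_-$-fibration with the incoming Floer end of the $Z_+$-fibration through the Hamiltonian transformation realizing $g$, since $(H^g, J^g)$ is precisely the push-forward of $(H, J)$ by $g$. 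Consequently the composition counts, up to Morse boundary terms, configurations consisting of a section of $P_-$ over $Z_-$, a finite Floer strip along which the data is twisted by $g$, and a section of $P_+$ over $Z_+$, with matching asymptotics at the two interior Floer ends and Morse-type boundary conditions on the semicircular arcs $\partial_+$.

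Next, I would glue these three pieces into a single Hamiltonian fibration over a compact disc and identify the result with $(P_g, N) \to (D^2, S^1)$. Indeed, gluing $Z_-$ to $Z_+$ through a strip on which the Hamiltonian data is twisted by $g$ produces, up to isomorphism of Hamiltonian bundles, exactly $P_g$: the monodromy of the glued bundle around $\partial D^2$ realizes the Hamiltonian loop given by $g$ on one half of $S^1$ and the constant loop on the other, which by Lemma~\ref{lem:pqisom} (applied via the equivalent model $Q$) is exactly the clutching data defining $P_g$; the two Morse ends after gluing correspond precisely to the fibers $L_\pm$ over $\pm 1$, with the global Morse function restricting to $f_\pm$. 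A one-parameter family of neck-lengths $R \in [R_0, \infty)$ then furnishes a parametrized moduli space whose $R \to \infty$ end recovers by Gromov compactness and standard gluing the count defining the left-hand side (no sphere or disc bubbles can appear in the neck thanks to weak exactness), while for any finite $R$ it matches the moduli space defining $\Psi_L(g, \sigma_0)$. Choosing $\sigma_0$ to be the homotopy class of the section obtained by gluing the constant $\mathrm{PSS}$ sections at the basepoints of $Z_\pm$ with the twisted constant Floer strip ensures that the parametrized cobordism preserves section classes, so that the counts agree class by class.

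The main technical obstacle is to organize the gluing and the choice of reference section class so that the chain-level identification holds for all section classes simultaneously, while achieving transversality uniformly in the neck-length parameter. This requires adapting the Biran-Cornea transversality machinery to the parametrized fibration setup, in the same spirit as the cobordism argument already used in \S 2 to prove independence of $\Psi_L(g, \sigma_0)$ from the auxiliary data. The weak exactness hypothesis then excludes all nontrivial disc and sphere bubbling in the fibers and in the degenerating neck, and guarantees that the section-class disc arising in any limit configuration is simple, so that regularity can be arranged and the signed counts match on the nose.
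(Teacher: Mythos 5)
Your proposal follows essentially the same route as the paper: glue the two PSS half-fibrations $P_\pm$ along a neck twisted by $g$ to obtain a Hamiltonian fibration isomorphic to $P_g$, identify the moduli spaces of sections by a gluing/compactness argument uniform in the neck length $R$, and take $\sigma_0$ to be the class of the glued constant sections. The only point you gloss that the paper treats explicitly (in a separate short lemma) is the verification that the glued section class $w \# (-w^g)$ is independent of the choice of capping $w$, which is exactly what legitimizes your ``class by class'' matching of the counts.
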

{\it Proof: } The proof is an application of the gluing method to explicitly identify the moduli spaces involved. Note that this only concerns the moduli spaces of dimension $\varleq 1$. It is analogous to the proof showing that PSS maps are isomorphisms (cf. \cite{BiranCornea}).
%
Recall that $\Psi_{\tilde g}$ is induced from the geometric map
$$g: [0,1] \times M \to [0,1] \times M : (t, x) \mapsto (t, g_t(x)).$$
Consider for each $R > 0$ the map $G_R : [1, R + 1] \times [-1, 1] \times M \to [-R - 1, -1] \times [-1, 1] \times M$ between subsets of $P_-$ and $P_+$ given by
$$G_R(s, t, x) = (s - R - 2, g(t, x)).$$
It is then  straightforward to check that $G_{R*} (\Omega_-) = \Omega_+^g$. It follows that $P_\pm$ may be glued symplectically using $G_{R}$:
$$P_g(R) := \left\{(P_- \setminus (R+1, \infty) \times M) \union (P_+ \setminus (-\infty, -R-1) \times M) \right\}/ G_R,$$
and we obtain a Hamiltonian fibration, which is homotopic to $P_g$ given in the previous geometric construction. The symplectic form $\Omega(R)$ on $P(R)$ can be written as
$$\Omega(R) = \kappa \tau(R) + \omega_{D^2}$$
where $\tau(R)$ is a coupling form.

The almost complex structures on $P_\pm$ are glued by $G_R$ as well, and the result is denoted $\tilde J_g(R)$. For pseudoholomorphic sections $u_\pm$ in $P_\pm$ with finite energy, which converges to $[l^g, w^g]$ and $[l, w]$ respectively, their limits at $s \to \pm \infty$ are identified naturally by $g$. It follows that under gluing by $G_R$, $u_\pm$ give rise to a pseudoholomorphic section of $P_g(R)$, for $R$ big enough, using standard gluing argument.
Let $\MMC(P_+, l^g)$ and $\MMC(P_-, l)$ be the moduli spaces of pseudoholomorphic sections with prescribed limits at infinity, and $\MMC(P(R))$ the moduli space of pseudoholomorphic sections. The same gluing argument (together with compactness) shows that for all $R > 0$,
$$\MMC(P(R)) \cong \union_{l}\MMC(P_+, l^g) \times \MMC(P_-, l).$$

Next we identify the maps. Let $\alpha_-$ denote a chain in $L_- $ and $\tilde l = [l, w]$. Let $\MMC(P_-, \tilde l)$ be the subspace of $\MMC(P_-, l)$ such that
$$I_\mu(w \# (-u^-)) = 0$$
where we think of $u^-$ as a map from the half disc to $M$ via the projection $P_- \to M$. Then the PSS map $QH_* \to FH_*$ is defined at the chain level as
$$PSS(\alpha_-) = \sum_{\tilde l_-} \#(ev_- \MMC(P_-, \tilde l) \pitchfork \alpha_-) \tilde l,$$
where $ev_-$ is the evaluation map
$$\MMC(P_-, \tilde l) \to L_- : u^- \mapsto u^-\left(\frac{1}{2}(1, -i)\right).$$
On the other hand, let $\alpha_+$ be a chain in $L_+ = L \times \left\{\frac{1}{2}(1, i)\right\}$ and $\MMC(P_+, \tilde l^g)$ be the subspace of $\MMC(P_+, l^g)$ such that
$$I_\mu(w^g \# u^+) = 0.$$
Then the PSS map $FH_* \to QH_*$ is defined as
$$PSS(\tilde l^g) = \sum_{B \in \pi_2(M, L), \alpha_+} \#(ev_+ \MMC(P_+, \tilde l^g \# B) \pitchfork \alpha_+) e^B\alpha_+.$$
It follows that the composition in the statement is defined by
$$PSS \circ \Psi_{\tilde g} \circ PSS(\alpha_-) = \sum_{\tilde l} \#(ev_- \MMC(P_-, \tilde l) \pitchfork \alpha_-) PSS(\tilde l^g)$$
$$= \sum_{\tilde l, B, \alpha_+} \#(ev_- \MMC(P_-, \tilde l) \pitchfork \alpha_-) \#(ev_+ \MMC(P_+, \tilde l^g \# B) \pitchfork \alpha_+) e^B\alpha_+$$
$$ = \sum_{\alpha_+, B}\#((ev_- \times ev_+)\MMC(P(R), \sigma^g_w \# B) \pitchfork (\alpha_- \times \alpha_+)) e^B \alpha_+.$$
where the last equality uses the isomorphism of the moduli spaces via gluing, where $\sigma^g_w$ can be formally written as $w \# (-w^g)$. Here the notation $\MMC(P(R), \sigma)$ means  the subset of $\MMC(P(R))$ such that $u(R)$ lies in the section class $\sigma$.

Comparing with the expression for $\Psi_L(g, \sigma_0)$, we see that the only thing left to show is that $w\# (-w^g)$ belongs to the same section class for any $\tilde l = [l, w]$. Then take $\sigma_0$ to be the common class and the proposition is established. This we show in the next lemma.
\qed

\begin{lemma}
Let $\tilde l = [l, w] \in \tilde P_LM$ and $\tilde g \in \tilde{\PPC}_L\Ham(M, \omega)$. The equivalence class of the section $\sigma^g_w = w \# (-w^g)$ of $P(R)$ defined above does not depend on $w$.
\end{lemma}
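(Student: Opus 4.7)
The plan is to trace the dependence of $[\sigma^g_w]$ on $w$ through the $\pi_2(M,L)$-structure of the covering $\tilde{\mc P}_L M \to \mc P_L M$ and then use the exact sequence $\pi_2(M,L) \to \pi_2(P_g,N) \to \pi_2(D^2,S^1)$ established earlier in the paper to see that the $B$-contributions cancel in $\pi_2(P(R),N(R))$. The single technical input will be an equivariance identity for the $\tilde g$-action on cappings.

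First I would observe that $\sigma^g_w$ depends on $w$ only through its rel-$\partial_+$ class $[l,w] \in \tilde{\mc P}_L M$: if $w' \sim_{\partial_+} w$, then $w^g \sim_{\partial_+} (w')^g$ by functoriality of the construction, so $\sigma^g_w$ and $\sigma^g_{w'}$ are rel-boundary homotopic sections of $P(R)$. It is therefore enough to handle two types of variation: (a) the action of a deck transformation $w \mapsto w \# B$ with $B \in \pi_2(M,L)$ at fixed $l$, and (b) continuous variation of $l$ within a connected component of $\mc P_L M$.

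For case (a), the covering $\tilde{\mc P}_L M \to \mc P_L M$ is regular with abelian deck group $\pi_2(M,L)$ acting by $[l,w] \mapsto [l,w\#B]$, and any continuous lift $\tilde g$ of the $g$-action must commute with this deck action. The key equivariance identity is then
\[
(w\#B)^g \;\sim_{\partial_+}\; w^g\#B, \qquad B\in\pi_2(M,L),
\]
which gives $\sigma^g_{w\#B} = (w\#B)\#\bigl(-(w^g\#B)\bigr)$. Viewed in $\pi_2(P(R),N(R))$ this differs from $\sigma^g_w$ by the sum $i_-(B) + i_+(-B)$ of two fiber classes attached at the left and right ends of the glued base. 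Since the fiber-class map $i : \pi_2(M,L) \to \pi_2(P(R),N(R))$ from the exact sequence above is well defined (does not depend on which fiber is used to include), we have $i_-(B) = i_+(B) = i(B)$, the two contributions cancel, and $[\sigma^g_{w\#B}] = [\sigma^g_w]$.

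For case (b), a continuous family $l_s$ of paths with continuously-varying cappings $w_s$ produces a continuous family of sections $\sigma^g_{w_s}$ of $P(R)$, whose class in $\pi_2(P(R),N(R))$ is a discrete invariant and therefore locally constant, hence constant along the family. Combined with case (a), this gives the lemma. The hard part will be a careful verification of the equivariance identity from the explicit construction of $w^g$ (the $g_t$-image of the homotopy $\alpha_\tau(t)$ spanning $w$, capped at $\tau = 1/2$ by the fixed half-disc $w^g_{l(0)}$): one has to track a $\pi_2(M,L)$-bubble through this three-step procedure and confirm that the outcome agrees with $w^g \# B$ up to rel-$\partial_+$ homotopy, a geometric incarnation of the general fact that continuous lifts commute with abelian deck transformations.
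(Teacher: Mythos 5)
Your argument is correct in outline, but it organizes the reduction differently from the paper. The paper reparametrizes $g$ and $w$ so that the section $\sigma^g_w$ visibly decomposes into three regions --- one equal to $\sigma^g_{l(0)}$, one constant, and one equal to $g_1\circ(w\#(-w))$ --- and then disposes of each piece by two special cases: $(g,\tilde g)=\id$, where $w\#(-w)$ projects to $0\in\pi_2(M,L)$ and the section is homotopic to the constant one, and constant paths, where the base point is slid along $L$. You instead split the variation of $[l,w]$ into the deck action of $\pi_2(M,L)$ on the capping and a continuous variation of $l$, invoking connectedness of the component of contractible paths and local constancy of the section class. Both reductions are legitimate and comparably short; yours makes the role of the covering $\tilde{\mc P}_LM\to\mc P_LM$ more transparent and isolates exactly one computation (tracking a bubble through the definition of $w^g$), while the paper's explicit homotopy avoids that bookkeeping altogether.

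One step as you state it needs repair. The assertion that the fiber inclusion $i:\pi_2(M,L)\to\pi_2(P(R),N)$ is independent of the fiber is not automatic: identifying two boundary fibers of $N\to S^1$ requires a choice of arc in $S^1$, and going once around composes with $g_{1*}$, which may act nontrivially on $\pi_2(M,L)$. Your cancellation survives for either of two more local reasons. First, if the bubble $B$ is attached at a point $\alpha_\tau(0)$ on the $t=0$ boundary of the capping, then since $g_0=\id$ the corresponding bubble on the $w^g$ side is attached at the same point of $M$, in a fiber joined to the first one by a short arc of $\partial D^2$ passing through the $t=0$ end of the gluing seam, where the transition map is the identity; hence $i_-(B)=i_+(B)$ there. (Had you attached $B$ at a $t=1$ point, it would reappear as $g_{1*}B$ on the other side, and the two contributions still cancel under the identification of those fibers by $g_1$.) Second, even without this, the equivalence relation on section classes only tests $\int\tau$ and $\mu^v$, and both invariants of $i_t(B)$ are independent of $t$ because $g_1$ is a symplectomorphism preserving $L$. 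With that fix, and with the equivariance $(w\#B)^g\sim w^g\#B$ verified at a $t=0$ attaching point as you propose, your proof is complete and equivalent in strength to the paper's.
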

{\it Proof: }
We show that the sections $\sigma^g_w$ are homotopic, which implies equivalence.
Suppose that $(g, \tilde g) = id$ then the resulting bundle $(P_g, N)$ is the trivial bundle pair. Let $pr: (P_g, N) \to (M, L)$ be the projection to the fiber, then $pr (w \# (-w))$ represents $0$ in $\pi_2(M, L)$. In particular, $w \# (-w)$ is homotopic to the section $\{l(0)\} \times D^2$. Thus, the equivalent class of $w \# (-w^g)$ does not depend on $w$.

Let $\tilde p = [p, p]$ be the trivial path and lifting to $\tilde P_LM$ where $p \in L$. Then $\sigma^g_p = p \# (-w_p^g)$.
Choose a path $\{p_t\}$ on $L$ which connects $p$ and $p'$, then it is obvious that $\sigma^g_p$ and $\sigma^g_{p'}$ are homotopic through sections with boundary on $N$ by $\sigma^g_{p_t}$. It follows that the equivalent class of $\sigma^g_p$ does not depend on the point $p$.

For general $[l, w]$ we show that it is the combination of the above two special cases. First, $(g, \tilde g)$ may be reparametrized such that $g_t = g_1$ for $t > \frac{1}{3}$, and
$w_{p}^g$ maps $D_+ \inter \{\Im z < \frac{2}{3}\}$ to $g_1(p)$ for all $p \in L$. Then, $w$ may be reparametrized such that $w = l(0)$ on $D_+ \inter \{\Im z > \frac{1}{3}\}$. The resulting section $\sigma_w^g$ is homotopic to the original $w \# (-w^g)$.

We can now identify the section $\sigma_w^g$ with the special cases. For $\Im z > \frac{2}{3}$, the section $\sigma^g_w$ coincides with $\sigma^g_{l(0)}$ and for $\Im z < \frac{1}{3}$, the section $\sigma^g_w$ coincides with $g_1\circ (w \# (-w))$. For $\frac{1}{3} < \Im z < \frac{2}{3}$, the section $\sigma_w^g$ coincides with $g_1(l(0))$.

Similar construction for another element $[l', w']$ gives a section $\sigma_{w'}^g$. Now the homotopies described in the special cases define a homotopy between the sections $\sigma_w^g$ and $\sigma_{w'}^g$.
%
\qed

\section{A second proof of Theorem \ref{thm:main}}

We now present another proof of the main theorem based on the \textit{analytic} Seidel map which is proved to be trivial in the symplectically aspherical case, thanks to additional algebraic structures and ideas appearing in Leclercq \cite{Leclercq}. This algebraic proof, which is geometrically less meaningful, is conceptually more elementary. We present it in the particular case when the coefficient ring $\mc R=\bb Z_2$ and the pair $(M,L)$ is symplectically aspherical since the proofs of the intermediate steps are fully written in this case in \cite{Leclercq}. The extension to the most general case of the theorem is expected to be straightforward.

\medskip
Recall that since we work under the symplectic asphericity condition, the quantum homology of $L$ is actually its Morse homology and we may forget about the Novikov ring on the Floer side.

\medskip
First, notice that there is an ``instantaneous'' version of  the analytic Seidel map described in Section \ref{identify}, consisting in using, instead of an isotopy $g\in \mc P_L \Ham(M,\omega)$, only a Hamiltonian diffeomorphism preserving $L$, $g_1\in \Ham_L(M,\omega)$. We let $\nat^i(g_1)$ denote this instantaneous version, as well as its Morse counterpart. 

Indeed, for any diffeomorphism $h\in\Diff(L)$, we can consider the morphism $\nat^i(h)$ identifying the complexes $\cm_*(L;f,\rho)$ and $\cm_*(L;f^{h},\rho^{h})$, where $f^{h}=f\circ h^{-1}$ and $\rho^{h}=(h^{-1})^*\rho$, via the following equivalences:
 \begin{align*}
   x \in \mathrm{Crit_k}(f) & \Leftrightarrow   x^h=h(x) \in \mathrm{Crit_k}(f^h)\\
 \gamma \mbox{ flow line of } (f,\rho) & \Leftrightarrow \gamma^h = h\circ \gamma \mbox{ flow line of } (f^{h},\rho^{h})
 \end{align*}
This identification induces an isomorphism on homology which commutes with the usual comparison morphism. (The commutativity can be obtained at the chain level, by choosing a regular homotopy $(\boldsymbol{f},\boldsymbol{\rho})$ between $(f_0,\rho_0)$ and $(f_1,\rho_1)$ and on the other side $(\boldsymbol{f}^{h},\boldsymbol\rho^{h})$.) The action of $h$ on the (Morse) homology of $L$ can then be seen as the composition of this identification with the usual Morse comparison  morphism (in order to end up in the initial complex $\cm_*(L;f,\rho)$). 

Now if we choose $g_1\in \Ham_L(M,\omega)$, we can compare the Floer and Morse instantaneous Seidel maps respectively associated to $g_1$ and its restriction to $L$. The first step of this second proof is to show that they do coincide, via the Lagrangian PSS morphism.
\begin{lemma}\label{lemm:1}
The following diagram commutes in homology:
\begin{align}\label{eq:diagram1}
 \begin{split}
  \xymatrix{\relax
    \cm_*(L;F,\rho) \ar[r]_{\hspace{-.3cm}\nat^i(g_{1|L})}\ar[d]_{\pss}  \ar@/^1.5pc/[rr]^{(g_1)_*} & \cm_*(L;F^{g_1},\rho^{g_1}) \ar[d]^{\pss} \ar[r]_{\hspace{.2cm}\comp}  & \cm_*(L;F,\rho) \ar[d]^{\pss}\\
    \cf_*(L;H,J) \ar[r]_{\hspace{-.3cm}\nat^i(g_1)} & \cf_*(L;H^{g_1},J^{g_1}) \ar[r]_{\hspace{.2cm}\comp}& \cf_*(L;H,J)
  }
 \end{split}
\end{align}  
\end{lemma}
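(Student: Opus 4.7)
The strategy is to establish the lemma by verifying the commutativity of the two squares of (\ref{eq:diagram1}) separately. The outer rectangle then recovers the identity $(g_1)_* = \comp \circ \nat^i(g_{1|L})$ on Morse homology, which is precisely the definition given in the paragraph preceding the statement. The left square will be proven at the chain level by exhibiting a bijection between the moduli spaces defining the PSS map, induced by the action of $g_1$; the right square is the standard compatibility of the Lagrangian PSS morphism with Morse and Floer continuation maps, as established in \cite{Leclercq}.

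For the left square, the key observation is that $g_1$ acts covariantly on all three types of objects defining the maps. On the Morse side, $g_1|_L$ sends critical points of $F$ to critical points of $F^{g_1}$ via $x \mapsto g_1(x)$, and negative gradient trajectories of $(F,\rho)$ to those of $(F^{g_1},\rho^{g_1})$ via $\gamma \mapsto g_1 \circ \gamma$. On the Floer side, $g_1$ acts on Hamiltonian chords by $l \mapsto g_1 \circ l$ and on Floer strips by $u \mapsto g_1 \circ u$; since $(g_1)_* X_H = X_{H^{g_1}}$ and $(g_1)_* J = J^{g_1}$, this transforms solutions of the Floer equation for $(H,J)$ into solutions for $(H^{g_1},J^{g_1})$. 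A PSS configuration associated to $(x, \tilde l)$ with $x \in \mathrm{Crit}(F)$ and $\tilde l = [l,w]$ consists of a negative gradient half-trajectory in $L$ emanating from $x$, matched at its endpoint with a finite-energy $\tilde J_-$-holomorphic section of $P_-$ asymptotic to $\tilde l$. Composing each piece with $g_1$ (and pushing the capping half-disc forward) produces a PSS configuration for the data $(F^{g_1},\rho^{g_1},H^{g_1},J^{g_1})$ from $g_1(x)$ to $[g_1\circ l,\, g_1\circ w]$. This defines a bijection of the zero-dimensional moduli spaces and directly yields the chain-level equality $\pss \circ \nat^i(g_{1|L}) = \nat^i(g_1) \circ \pss$.

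For the right square, one chooses a generic homotopy of data interpolating between $(F^{g_1},\rho^{g_1},H^{g_1},J^{g_1})$ and $(F,\rho,H,J)$, compatibly on the Morse and Floer sides. Counting parametrized PSS configurations for this homotopy, the one-dimensional components provide, after analyzing their boundary, the chain homotopy between $\pss \circ \comp$ and $\comp \circ \pss$ at the chain level. This is worked out in detail in \cite{Leclercq} and transfers essentially verbatim to the present setting.

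The main potential obstacle is the first step: one must check that the fiberwise action $(x,z) \mapsto (g_1(x),z)$ on the PSS fibration $P_-$ intertwines the symplectic form $\Omega_-$ built from $(H,J)$ with the analogous form built from $(H^{g_1}, J^{g_1})$, and likewise for the tamed almost complex structures $\tilde J_-$. Both facts follow tautologically from $H^{g_1} = H\circ g_1^{-1}$ and $J^{g_1} = (g_1)_* J$, together with the identity $(g_1)_* X_H = X_{H^{g_1}}$, but deserve to be spelled out. The boundary condition is preserved since $g_1(L) = L$, and over $\ZZ$ coefficients one additionally needs $g_1$ to preserve the chosen relative spin structure, which is part of the hypotheses of Theorem \ref{thm:main}.
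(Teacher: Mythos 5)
Your proposal is correct and follows the same route as the paper: the right square is the standard commutation of the Lagrangian PSS morphism with the Morse and Floer comparison morphisms (cf.\ \cite[Lemma 2.3]{Leclercq}), and the left square holds at the chain level because composition with $g_1$ (resp.\ $g_1|_L$) identifies the moduli spaces defining the two PSS morphisms, the data on the right being exactly the pullbacks of the data on the left. Your write-up simply spells out in more detail the moduli-space identification and the intertwining of $\Omega_-$ and $\tilde J_-$ that the paper leaves implicit.
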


\begin{proof}
Since the PSS morphism commutes with the classical Morse and Floer comparison morphisms  (see e.g \cite[Lemma 2.3]{Leclercq} for a proof), the right square of \eqref{eq:diagram1} commutes in homology. 

Now the left square commutes (at the chain level) for any regular choices of Floer and Morse data on the left and their respective pullbacks via the Hamiltonian diffeomorphism and its restriction to $L$. Indeed, in that case, there is an identification of the moduli spaces 
defining the involved PSS morphisms.
\end{proof}

Since \eqref{eq:diagram1} commutes, it suffices to prove that the composition of the two morphisms at the Floer level induces the identity in homology. In order to do so, we express the action of $\nat^i(g_1)$ on $\hf_*(L)$ in terms of the (non-instantaneous) analytic Seidel map 
(and Poincar\'e duality).
\begin{lemma}\label{lemm:2}
The following diagram commutes:
\begin{align}\label{eq:diagram2}
 \begin{split}
\xymatrix@C=15pt{\relax
    \cf_*(L;H,J) \ar[dr]_{\nat(g)}\ar[rr]^{\nat^i(g_1)} &                                                                     & \cf_*(L;H^{g_1},J^{g_1}) \ar[r]^{\mathrm{PD}} &  \cf^{\hat *}(L;\widehat{H^{g_1}},\widehat{J^{g_1}}) \\
                                                                 & \cf_*(L;H^{g},J^{g}) \ar@{..>}[ru] \ar[r]^{\mathrm{PD}} & \cf^{\hat *}L;\widehat{H^{g}},\widehat{J^{g}}) \ar[ru]_{\nat(g')}    &
}
 \end{split}
\end{align}
where  $g'$ is the isotopy defined as $g'_t=g_1 \circ g^{-1}_{1-t}$. (The doted arrow, usually denoted $[\nat(g')]_!$, is defined by the commutativity of the right square, that is, by pre- and post-composing $\nat(g')$ with Poincar\'e duality.)
\end{lemma}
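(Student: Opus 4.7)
The plan is to verify the diagram commutativity by a chain-level factorization of $\nat^i(g_1)$ combined with a Poincar\'e duality identity for Seidel morphisms.

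First, I would decompose the instantaneous map at the Floer level as
\[
\nat^i(g_1) = R \circ \nat(g),
\]
where $R\co \cf_*(L;H^g,J^g) \to \cf_*(L;H^{g_1},J^{g_1})$ is a ``correction'' chain map. The comparison of the chain-level formulas is direct: $\nat(g)$ sends a generator $[l,w]$ to $[l^g,w^g]$ with $l^g(t) = g_t(l(t))$, whereas $\nat^i(g_1)$ sends the same $[l,w]$ to $[g_1\circ l, g_1\circ w]$. The two images differ at each $t$ by the diffeomorphism $\tilde g_t := g_1\, g_t^{-1}$, which runs from $g_1$ at $t=0$ to the identity at $t=1$. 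Its time-reversal $\bar{\tilde g}_t := g_1\, g_{1-t}^{-1} = g'_t$ is, precisely by construction, a Hamiltonian isotopy from the identity to $g_1$ in $\PPC_L\Ham(M,\omega)$. The correction map $R$ can therefore be realized as the analytic Seidel morphism $\nat(\bar{g'})$ attached to the reverse of the path $g'$, giving $\nat^i(g_1) = \nat(\bar{g'})\circ\nat(g)$ in $\hf_*$.

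Second, I would identify $\nat(\bar{g'})$ with the Poincar\'e dual $[\nat(g')]_!$. This is an instance of the general Poincar\'e duality principle for Seidel morphisms: the Seidel map of a time-reversed path on homology is conjugate, via $\mathrm{PD}$, to the Seidel map of the forward path on cohomology. At the chain level, the identification comes from the involution $(s,t) \mapsto (-s, 1-t)$ on the half-strips defining the two maps, which swaps the two asymptotic orbits and converts the Floer equation for $\nat(\bar{g'})$ into the Floer equation for $\nat(g')$ with the dual Floer data $(\widehat{H^g},\widehat{J^g})$, $(\widehat{H^{g_1}},\widehat{J^{g_1}})$. Consequently $\mathrm{PD}\circ\nat(\bar{g'}) = \nat(g')\circ\mathrm{PD}$, which is exactly the relation defining $[\nat(g')]_! = \mathrm{PD}^{-1}\circ\nat(g')\circ\mathrm{PD}$.

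Combining the two steps gives
\[
\mathrm{PD}\circ\nat^i(g_1) \;=\; \mathrm{PD}\circ\nat(\bar{g'})\circ\nat(g) \;=\; \nat(g')\circ\mathrm{PD}\circ\nat(g),
\]
which is precisely the commutativity asserted in \eqref{eq:diagram2}. The main obstacle is the second step: the chain-level Poincar\'e duality identity for Seidel morphisms in the Lagrangian Floer setting. The analogous identity for closed symplectic manifolds is carried out in \cite{Leclercq}, and the Lagrangian adaptation reduces to checking that the strip involution respects the Lagrangian boundary condition, the Maslov index conventions, and the compatibility between the reference section classes chosen to define $\nat(g)$ and $\nat(g')$.
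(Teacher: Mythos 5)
Your proof is correct and amounts to essentially the same chain-level verification as the paper's: the paper simply chases a single orbit $x(t)$ around the diagram, computing $g'_t\bigl(g_{1-t}(x(1-t))\bigr)=g_1(x(1-t))$ and hence recovering $x^{g_1}$ after the final $\mathrm{PD}^{-1}$, while you package the identical computation as the factorization $\nat^i(g_1)=\nat(\bar{g'})\circ\nat(g)$ followed by the conjugation identity $\mathrm{PD}\circ\nat(\bar{g'})=\nat(g')\circ\mathrm{PD}$. Both arguments rest on the same pointwise identity $g'_t\circ g_{1-t}=g_1$ (equivalently $\bar{g'}_t\circ g_t=g_1$) together with the observation that the half-tubes defining the differentials transform in the same way, so the reorganization changes nothing of substance.
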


Let us briefly recall the Floer theoretic version of Poincar\'e duality. This isomorphism is defined by identifying the complexes $\cf(L;H,J)$ and $\cf(L;\widehat{H},\widehat{J})$ for any regular pair $(H,J)$, where $(\widehat{H},\widehat{J})$ is ``dual'' to $(H,J)$, that is, is defined as $\widehat{H}_t(x)=-H_{1-t}(x)$ and $\widehat{J}_t=J_{1-t}$. The generators are geometrically the same orbits but considered with the opposite orientation, and so are the half-tubes defining the respective differentials. (A priori $\hat *=-*$, if the references of the Maslov indices are chosen so that they geometrically coincide. Other choices amount to global shifts of the degree which do not matter here.)

Via straightforward computations, it is easy to see that $(\widehat{H^g})^{g'}=\widehat{H^{g_1}} = \widehat{H}^{g_1}$ and $(\widehat{J^g})^{g'}=\widehat{J^{g_1}} = \widehat{J}^{g_1}$, and that all the involved pairs are regular if and only if $(H,J)$ is. Thus, \eqref{eq:diagram2} makes sense as it is (on the complexes).

\begin{remark}\label{rema:PDcompcommute}
Notice for later use, that Poincar\'e duality commutes with the usual comparison morphism of Floer homology, 
since it even commutes at the chain level as soon as one uses timewise dual homotopies.  
\end{remark}

\begin{proof}[Proof of Lemma \ref{lemm:2}]
Figure \ref{fig:diagram2} illustrates the evolution of an orbit along \eqref{eq:diagram2}.
\begin{figger}
  \centering
  \includegraphics{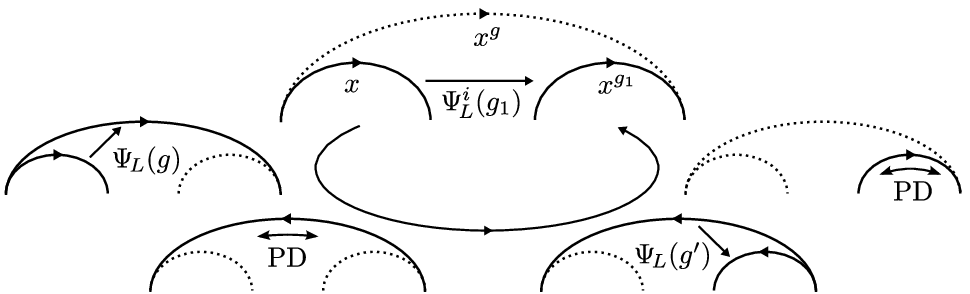}
    \label{fig:diagram2}
\end{figger}

\noindent Namely, for each time $t$, we have:
\begin{align*}
x(t) &\;\leadsto\; x^g(t) = g_t(x(t)) \;\leadsto\; \mathrm{PD}(x^g)(t) = g_{1-t}(x(1-t)) \\
& \;\leadsto\; \big(\mathrm{PD}(x^g)\big)^{g'}(t) = g_{1}(x(1-t)) \;\leadsto\; \mathrm{PD}^{-1}\big[\big(\mathrm{PD}(x^g)\big)^{g'}\big](t) = g_{1}(x(t))=x^{g_1} (t)
\end{align*}
The half-tubes defining Floer differential evolve in a similar way and the commutativity of the diagram immediately follows. 
\end{proof}

Now comes the crucial point: When $(M,L)$ is symplectically aspherical, the analytic Seidel map acts trivially on (Morse) homology.
\begin{lemma}\label{lemm:3}
The analytic Seidel map (of Proposition \ref{prop:identification}), is the identity, that is, $\nat(g)$ acts on Floer homology as the usual comparison morphism.
\end{lemma}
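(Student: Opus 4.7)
The plan is to realize $\nat(g)$ itself as a Floer continuation morphism --- for a carefully chosen $s$-dependent interpolation of Floer data --- and then invoke the homotopy invariance of continuation maps under change of interpolating family. Homotopy invariance is available here because the symplectic asphericity of $(M,L)$ rules out both sphere and disc bubbling, so the standard compactness arguments for the relevant parametric moduli spaces go through unchanged.

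I would first pick a smooth cut-off $\beta \co \RR \to [0,1]$ with $\beta(s)=0$ for $s\le 0$ and $\beta(s)=1$ for $s\ge 1$, form the truncated path $g^{(s)}\in\mc P_L\Ham(M,\omega)$ by $g^{(s)}_t := g_{\beta(s)t}$, and consider the $s$-dependent Floer data $(\mb H_s,\mb J_s) := (H^{g^{(s)}},J^{g^{(s)}})$. This family is $(H,J)$ at $s=-\infty$ and $(H^g,J^g)$ at $s=+\infty$, so its continuation moduli spaces define a chain map $\cf_*(L;H,J)\to\cf_*(L;H^g,J^g)$. The change of variables $\bar u(s,t):=(g^{(s)}_t)^{-1}(u(s,t))$ then transforms the associated continuation equation for $u$ into an ordinary Floer equation for $(H,J)$ perturbed by a term supported in $s\in[0,1]$, and a direct inspection shows that the zero-dimensional component of the resulting moduli space is in canonical bijection with the generator-wise assignment $l\mapsto l^g$. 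Hence the counting recovers the chain-level formula for $\nat(g)$, realizing $\nat(g)$ as a Floer continuation map.

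Once $\nat(g)$ is so realized, I compare it with $\comp$ --- itself a continuation map for a different (e.g.\ linear) interpolating family --- via the standard homotopy-of-homotopies argument: connect the two interpolating families by a two-parameter family $(\mb H_s^\tau,\mb J_s^\tau)$, $\tau\in[0,1]$, and count the zero-dimensional components of the resulting parametric moduli space to obtain an explicit chain homotopy between the two chain maps. Weak exactness ensures that the parametric moduli space compactifies only by broken Floer trajectories (no sphere or disc bubbles), so no correction terms appear and the chain-homotopy identity holds. Combining the two steps yields $\nat(g)=\comp$ on $\hf_*(L)$, which is the content of the lemma.

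The main obstacle is the first step: verifying that the continuation moduli space for the specific family $(H^{g^{(s)}},J^{g^{(s)}})$ reproduces, at the chain level, the push-forward identification $l\mapsto l^g$ rather than some off-diagonal variant of it. This is where the particular form of the generator $K$ of $g$ and the compatibility of the induced $s$-dependence enter, through the change-of-variables computation on the perturbation term. Once this chain-level identification is in place, the remaining steps are routine Floer-theoretic bookkeeping resting only on Gromov--Floer compactness under the weak-exactness hypothesis.
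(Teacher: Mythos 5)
Your second step (independence of continuation maps on the choice of interpolating family) is standard, but it also reveals that the entire content of the lemma has been compressed into your first step: the claim that the continuation map for the family $(\mb H_s,\mb J_s)=(H^{g^{(s)}},J^{g^{(s)}})$ coincides at the chain level with the naturality assignment $l\mapsto l^g$. You justify this by ``direct inspection,'' but this is precisely where the lemma can fail, and as written your argument uses the hypothesis on $(M,L)$ only through compactness. That proves too much: all the relevant parametric moduli spaces are equally compact when $(M,L)$ is monotone with minimal Maslov number at least $2$, and in that setting the relative Seidel morphism is in general \emph{not} the comparison morphism (it acts by multiplication by a generally nontrivial invertible element of $QH_*(L)$). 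So any correct proof must use weak exactness somewhere other than compactness, and your sketch does not say where.

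Concretely, after the change of variables $\bar u(s,t)=(g^{(s)}_t)^{-1}(u(s,t))$ the equation for $\bar u$ acquires not only an inhomogeneous term supported in $s\in[0,1]$ but also a \emph{moving Lagrangian boundary condition} $\bar u(s,1)\in g_{\beta(s)}^{-1}(L)$, which is not $L$ for intermediate $s$ since $g_{\beta(s)}\notin\Ham_L(M,\omega)$ there; moreover the cappings of the generators are shifted by the class $w^g$ rather than by naive concatenation. This moving-boundary problem is exactly the holomorphic-section problem in the bundle $(P_g,N)$, so the zero-dimensional moduli space is a count of sections in a fixed section class, not the diagonal $\{\,l\mapsto l^g\,\}$; to finish you would still need to show that in the weakly exact case only one section class contributes and that its signed count is $1$ --- which is the lemma. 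The paper sidesteps this analysis with an algebraic argument: $\Phi=\pss^{-1}\circ\nat(g)\circ\pss$ is a morphism of $\hm_*(L)$--modules, asphericity forces $\Phi([L])=[L]$ (no quantum corrections, so the image of the unit stays in top degree and is $[L]$ over $\ZZ_2$), whence $\Phi(a)=a\cdot\Phi([L])=a$ for all $a$. If you prefer a geometric route, the missing ingredient is this section-class (or unit-preservation) argument, in the spirit of the commutativity of diagram (5.4) adapted from Leclercq's Proposition 3.1, and it must be supplied explicitly.
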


\begin{remark}
Recall that the PSS morphism and $\nat(g)$ can be composed since the action of $g$ preserves the component of $\mc P(L)$ consisting of contractible (in $\pi_1(M,L$)) paths (see Bialy--Polterovich \cite[Theorem 1.8]{BialyPolterovich} for the proof of this result in the weakly exact case).
\end{remark}

The proof of the theorem now easily follows. Indeed, Lemma \ref{lemm:3} allows us to replace the Seidel map by the usual comparison morphism, in the diagram induced in homology by (\ref{eq:diagram2}). Thus, we get the commutative diagram
\begin{align}\label{eq:diagram3}
 \begin{split}
\xymatrix{\relax
    \hf_*(L) \ar[d]_{\comp}\ar[rr]^{\nat^i(g_1)} && \hf_*(L) &  \\
   \hf_*(L) \ar[r]_{\mathrm{PD}} & \hf^*(L) \ar[r]_{\comp}  & \hf^*(L) \ar[u]_{\mathrm{PD}^{-1}} 
}
 \end{split}
\end{align}

Now, as noticed above (see Remark \ref{rema:PDcompcommute}), the comparison morphism commutes with Poincar\'e duality, thus we can permute the two morphisms composing the bottom line of \eqref{eq:diagram3}. Since the Floer comparison morphism is natural, this shows that $\nat^i(g_1)$ acts on $\hf_*(L)$ as the comparison morphism and this, in turn, proves that the bottom line of (\ref{eq:diagram1}) induces the identity in homology, which concludes the proof of Theorem \ref{thm:main}.\\

Therefore, it only remains to prove Lemma \ref{lemm:3}. The fact that, for a symplectically aspherical pair $(M,L)$, $\Psi_L (g)$ acts on Floer homology as the comparison morphism can be indirectly deduced from the commutative diagram of \cite[Proposition 3.1]{Leclercq}, by first arbitrarily ``cutting'' in two parts the Hamiltonian isotopy which we consider. However, the proof of \cite[Proposition 3.1]{Leclercq} itself can be easily adapted to immediately show that the diagram
\begin{align}\label{eq:diag08}
\begin{split}
\xymatrix{\relax
    \cm_*(L;f,\rho) \ar[d]_{\pss}\ar[r]^{\hspace{-.3cm}\pss} & \cf_*(L;H^g,J^g) \\
    \cf_*(L;H,J) \ar[ru]_{\nat(g)} & 
}
\end{split}
\end{align}
commutes in homology, and this commutativity amounts to the triviality of the analytic Seidel map (compare with Proposition \ref{prop:identification}).

\begin{proof}[Proof of Lemma \ref{lemm:3}]
We know from \cite[\S 3.1-3.2]{Leclercq} that all the involved groups are $\hm_*(L)$--modules, and that this additional structure is preserved by the PSS morphism and by $\nat (g)$. Thus $\Phi = (\pss)^{-1} \circ \nat(g) \circ \pss$ is an endomorphism of $\hm_*(L)$ (as a module over itself). 

Now $\Phi([L])= [L]$, since $[L]$ generates $\hm_{\mathrm{top}}(L)$ (due to $\bb Z_2$ coefficients, signs are arbitrary). Since $[L]$ is also the unit of the {\it ring} $(\hm_*(L),\,\cdot\,)$, we have for any $a\in \hm_*(L)$: 
\begin{align*}
\Phi(a) &=\Phi(a\cdot [L]) = a\cdot \Phi([L]) =a\cdot [L] =a \;.
\end{align*}
Thus $\Phi$ is the identity and \eqref{eq:diag08} commutes. Now, since the PSS morphism commutes with the usual comparison morphisms, the composition of the PSS morphisms in \eqref{eq:diag08} is nothing but the Floer comparison morphism. Thus both morphisms
$$\nat(g),\,\comp \co \xymatrix{\relax     \cf_*(L;H,J) \ar@<.5ex>[r] \ar@<-.5ex>[r]  &  \cf_*(L;H^g,J^g)  }$$
coincide in homology. 
\end{proof}

\end{document}